\numberwithin{equation}{section}
\def\1#1{\overline{#1}}
\def\2#1{\widetilde{#1}}
\def\3#1{\widehat{#1}}
\def\4#1{\mathbb{#1}}
\def\5#1{\frak{#1}}
\def\6#1{{\mathcal{#1}}}
\newcommand{\R}{\mathbb R}
\newcommand{\C}{\mathbb C}
\newcommand{\B}{\mathbb B}
\newcommand{\D}{\mathbb D}
\newcommand{\N}{\mathbb N}
\theoremstyle{theorem}
\def\id{{\sf id}}
\newtheorem{theorem}{Theorem}[section]
\newtheorem{lemma}[theorem]{Lemma}
\newtheorem{proposition}[theorem]{Proposition}
\newtheorem{conjecture}[theorem]{Conjecture}
\theoremstyle{definition}
\newtheorem{definition}[theorem]{Definition}
\theoremstyle{remark}
\newtheorem{remark}[theorem]{Remark}
\numberwithin{equation}{section}
\title[convex maps]{A proof of the Muir-Suffridge conjecture for convex maps of the unit ball in $\C^n$}
\author[F. Bracci]{Filippo Bracci$^\dag$}
\address{F. Bracci: Dipartimento di Matematica, Universit\`a di Roma ``Tor Vergata", Via della Ricerca
Scientifica 1, 00133, Roma, Italia.} \email{fbracci@mat.uniroma2.it}
\author[H. Gaussier]{Herv\'e Gaussier$^\ddag$}
\address{H. Gaussier: Univ. Grenoble Alpes, CNRS, IF, F-38000 Grenoble, France}\email{herve.gaussier@univ-grenoble-alpes.fr}
\keywords{holomorphic convex maps of the unit balls; boundary extension; Gromov hyperbolicity; semigroups of holomorphic maps; commuting maps}
\thanks{$^\dag\,$Partially supported by GNSAGA of INDAM}
\thanks{$^\ddag\,$Partially supported by ERC ALKAGE}
\thanks{This paper was written as part of the  2016-17 CAS project {\sl Several Complex Variables and Complex Dynamics}}
\long\def\REM#1{\relax}
\begin{document}
\maketitle

\begin{abstract}
We prove (and improve) the Muir-Suffridge conjecture for holomorphic convex maps. Namely, let $F:\B^n\to \C^n$ be a univalent map from the unit ball whose image $D$ is convex. Let $\mathcal S\subset \partial \B^n$ be the set of points $\xi$ such that   $\lim_{z\to \xi}\|F(z)\|=\infty$. Then we prove that $\mathcal S$ is either empty, or contains one or two points and $F$ extends as a homeomorphism $\tilde{F}:\overline{\B^n}\setminus \mathcal S\to \overline{D}$. Moreover, $\mathcal S=\emptyset$ if $D$ is bounded,  $\mathcal S$ has one point if $D$ has one connected component at $\infty$ and  $\mathcal S$ has two points if $D$ has two connected components at $\infty$ and, up to composition with an automorphism of the ball and renormalization, $F$ is an extension of the strip map in the plane to higher dimension.
\end{abstract}

\sloppy

\section{Introduction}

Let $\B^n:=\{z\in \C^n: \|z\|<1\}$ be the unit ball of $\C^n$, $n\geq 1$. A domain $D\subset \C^n$ is  {\sl convex}  if  for every $z, w\in D$ the real segment joining $z$ and $w$ is contained in $D$. 

In the paper \cite{MS1} J. Muir and T. Suffridge made the following  conjecture:
\begin{conjecture}\label{conj}
Let $F:\B^n\to \C^n$ be a univalent map such that $F(0)=0$ and $dF_0=\id$. Suppose that $D:=F(\B^n)$ is convex. Then
\begin{itemize}
\item[(a)] $D$ is bounded and $F$ extends continuously to $\partial \B^n$, or
\item[(b)] $F$ extends continuously to $\partial \B^n$ except for one point that is an infinite discontinuity, or
\item[(b)] up to pre-composing with an automorphism of $\B^n$ and post-composing with an affine transformation, there exists a holomorphic function $H:\B^{n-1}\to \C^n$, continuous up to $\partial \B^{n-1}$, such that
\begin{equation}\label{formaMS}
F(z_1,z')=\left(\frac{1}{2}\log \frac{1+z_1}{1-z_1}  \right)e_1+H\left (\frac{z'}{\sqrt{1-z_1^2}}\right).
\end{equation}
\end{itemize}
\end{conjecture}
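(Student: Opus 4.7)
The plan is to exploit the interplay between Kobayashi geometry, Gromov hyperbolicity of convex domains, and commuting one-parameter semigroups of holomorphic self-maps of $\B^n$. Throughout, $F:\B^n\to D$ is a biholomorphism onto its image, and hence an isometry between $(\B^n,k_{\B^n})$ and $(D,k_D)$.

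First I would observe that $D$ cannot contain a complex affine line: otherwise $k_D$ would vanish on such a line, while $k_{\B^n}$ is a genuine distance, contradicting the isometry property. Under this assumption, Gromov hyperbolicity results for convex domains (Balogh--Bonk, Zimmer, Bracci--Gaussier) imply that $(D,k_D)$ is Gromov hyperbolic, so $F$ extends to a homeomorphism between the Gromov compactifications of $\B^n$ and $D$. To count $|\mathcal S|$, I would invoke the structure of ends of unbounded line-free convex domains: the recession cone $C_\infty(D):=\bigcap_{t>0} t(D-w_0)$ is convex and line-free, so $\partial D$ has at most two connected components at infinity, with exactly two occurring precisely when $C_\infty(D)\cap(-C_\infty(D))$ is a non-trivial real linear subspace. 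Under the Gromov boundary extension each such component corresponds to a single point of $\partial\B^n$, whence $|\mathcal S|\in\{0,1,2\}$.

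The homeomorphic extension to $\overline{\B^n}\setminus\mathcal S$ would then follow by combining the Gromov boundary homeomorphism with Mercer--Frankel--Zimmer type estimates of $k_D$ at finite boundary points: a supporting real hyperplane at $p\in\partial D$ together with convexity forces all sequences $F(z_k)\to p$ to correspond to a unique Gromov boundary point, and the separating hyperplane property of $\overline{D}$ ensures distinct Gromov boundary points of $\partial\B^n\setminus\mathcal S$ produce distinct Euclidean limits. Case (c), when $|\mathcal S|=2$, is the most delicate. The two opposite directions at infinity yield a full $\C$-action on $D$ by translations $w\mapsto w+tv$, with $v$ spanning $C_\infty(D)\cap(-C_\infty(D))$; conjugation by $F^{-1}$ produces a non-trivial $\C$-action on $\B^n$ by holomorphic self-maps with common Denjoy--Wolff fixed points $\xi_\pm\in\mathcal S$. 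After pre-composing with an element of $\Aut(\B^n)$ sending $\xi_\pm$ to $\pm e_1$, the Berkson--Porta / Bracci--Shoikhet classification of hyperbolic semigroups identifies the first coordinate of $F$ with $\tfrac{1}{2}\log\frac{1+z_1}{1-z_1}$, and the rigidity of the transverse action on the slices $\{z_1=c\}$, whose intrinsic radii scale as $\sqrt{1-z_1^2}$, forces the remaining coordinates to take the form $H(z'/\sqrt{1-z_1^2})$; convexity of $D$ together with the already-established homeomorphic extension then forces $H$ to be continuous up to $\partial\B^{n-1}$.

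The main obstacle I anticipate is precisely the identification of the explicit form \eqref{formaMS} in case (c): converting the abstract Denjoy--Wolff data for the commuting $\C$-action into the decomposition above requires a rigidity theorem for holomorphic maps intertwining two specific commuting one-parameter semigroups on $\B^n$, and matching the transverse behaviour against the $\sqrt{1-z_1^2}$ normalization dictated by the slice geometry of the ball. A secondary but non-trivial point is to verify that the boundary extension is a \emph{homeomorphism} --- injective with continuous inverse --- rather than merely a continuous extension; this requires that the separating hyperplane argument at finite boundary points of $D$ be promoted to an injectivity statement uniformly along $\partial\B^n\setminus\mathcal S$.
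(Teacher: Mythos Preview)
Your overall architecture---Gromov hyperbolicity for the continuous extension, recession-cone/ends analysis for the trichotomy, and semigroup dynamics for case~(c)---is close to the paper's, but there are two genuine gaps.

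\textbf{The core difficulty is assumed away.} You assert that ``under the Gromov boundary extension each such component corresponds to a single point of $\partial\B^n$'', and deduce $|\mathcal S|\le 2$ from the end count. This is precisely the hard step, and it is \emph{false} for Gromov hyperbolic spaces in general (think of $\Ha^2$: one end, a circle of Gromov boundary points). What must be shown is that any two sequences in $D$ escaping to $\infty$ within the same unbounded component of $D\setminus\overline{B(0,R)}$ are pulled back by $F^{-1}$ to sequences converging to the \emph{same} point of $\partial\B^n$. The paper does this via an argument you do not outline: each direction $v$ at $\infty$ yields a semigroup $\phi^v_t(\zeta)=F^{-1}(F(\zeta)+tv)$ with Denjoy--Wolff point $\xi_v$; for two $\R$-independent directions $v,w$ the semigroups commute, and if $\xi_v\neq\xi_w$ then a rigidity theorem for commuting fixed-point-free self-maps of $\B^n$ (they must restrict to hyperbolic automorphisms of a common slice disc) forces a relation $s_0w+v=0$, a contradiction. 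A separate horosphere argument is then needed to show that an arbitrary sequence $p_k\to\infty$ with $p_k/\|p_k\|\to w$ satisfies $F^{-1}(p_k)\to\xi_w$. None of this is automatic from the Gromov compactification picture.

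\textbf{The $\C$-action in case (c) does not exist.} When $D$ has two components at $\infty$, the only directions at $\infty$ are $v$ and $-v$; this gives a real one-parameter group $w\mapsto w+tv$, $t\in\R$, not a $\C$-action. Indeed, if $iv$ were also a direction at $\infty$ then $v$ and $iv$ would be $\R$-independent directions, forcing a single component at $\infty$; and a full complex line $\{w+\zeta v:\zeta\in\C\}\subset D$ would contradict hyperbolicity, as you yourself note at the outset. So the Berkson--Porta style classification you invoke applies only to an $\R$-semigroup, and your mechanism for isolating the first coordinate as $\tfrac12\log\frac{1+z_1}{1-z_1}$ needs revision. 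In fact the paper does not re-derive \eqref{formaMS} at all: once it is established that the two infinite discontinuities $\xi_1\neq\xi_2$ satisfy $\xi_j=\lim_{t\to\pm\infty}F^{-1}(z_0+tv)$, formula~\eqref{formaMS} is quoted from Muir--Suffridge \cite{MS1}, and the only new contribution in this case is the continuity of $H$ up to $\partial\B^{n-1}$, which follows from the homeomorphic extension $\overline{\B^n}\setminus\{\xi_1,\xi_2\}\to\overline D$.
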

Here, $z=(z_1,z')\in \C\times \C^{n-1}$, $e_1=(1,0,\ldots, 0)$ and a point $\xi\in \partial \B^n$ is a point of infinite discontinuity for $F$ if $\lim_{z\to \xi}\|F(z)\|=\infty$.

We point out explicitly that no regularity assumptions on the boundary of $D$ are made.

Since the boundary of a convex domain in $\C$ is a Jordan curve,  the conjecture is true for $n=1$ because of Carath\'eodory's extension theorem. In case $n>1$, in \cite[Theorem~2.13]{MS1}, Muir and Suffridge proved that if there exists $v\in \C^n$, $\|v\|=1$, such that $tv\subset D$ for all $t\in \R$, then $\xi_1:=\lim_{t\to - \infty} F^{-1}(tv)$ and $\xi_2:=\lim_{t\to  \infty} F^{-1}(tv)$ exist and, if $\xi_1\neq \xi_2$, then $F$ has the form \eqref{formaMS} (although they could not prove that $H$ extends continuously up to $\partial \B^{n-1}$). In \cite{MS2}, the same authors proved that, in case $\xi_1=\xi_2$ and there is only one infinite singularity whose span contains the direction $v$, then $F$ has a simple form for which the Conjecture holds. 

In the recent paper \cite{BG} we proved, as a consequence of a prime ends-type theory (called ``Horosphere theory'') we developed there, that the conjecture is true if $D$ is bounded and, moreover, in that case, $F$ extends as a homeomorphism up to $\overline{\B^n}$. 

In this paper we prove Conjecture \ref{conj}. In fact, we prove a stronger result:

\begin{theorem}\label{main-intro}
Let $F:\B^n \to D\subset \C^n$ be a biholomorphism. Suppose that $D$ is convex. Then 
\begin{enumerate}
\item  $D$ is bounded and $F$ extends as a homeomorphism from $\overline{\B^n}$ onto $\overline{D}$, or
\item $D$ is unbounded and has one connected component at $\infty$,  there exists a unique point $\xi\in \partial \B^n$ such that $\lim_{z\to \xi}\|F(z)\|=\infty$ and $F$ extends as a homeomorphism from $\overline{\B^n}\setminus \{\xi\}$ onto $\overline{D}$, or
\item  $D$ is unbounded and has two connected components at $\infty$,  there exist $\xi_1, \xi_2\in \partial \B^n$, $\xi_1\neq\xi_2$, such that $\lim_{z\to \xi_1}\|F(z)\|=\lim_{z\to \xi_2}\|F(z)\|=\infty$ and $F$ extends as a homeomorphism from $\overline{\B^n}\setminus \{\xi_1, \xi_2\}$ onto $\overline{D}$.
\end{enumerate}
\end{theorem}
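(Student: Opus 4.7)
Since case (1) is already proved in \cite{BG}, the plan is to reduce the unbounded cases to the Denjoy-Wolff dynamics of a canonical commuting family of semigroups on $\B^n$. A convex unbounded $D$ has a non-trivial recession cone
$$
\mathcal{C}_D := \{v \in \C^n : w + tv \in D \text{ for all } w \in D \text{ and } t \geq 0\},
$$
and each nonzero $v \in \mathcal{C}_D$ generates a holomorphic semigroup $\Psi^v_t(w) = w + tv$ of $D$ without interior fixed points. Pulling back by $F$ produces a commuting family of holomorphic semigroups $\Phi^v_t := F^{-1}\circ \Psi^v_t \circ F$ on $\B^n$, each with a unique Denjoy-Wolff point $\xi_v \in \partial \B^n$ attracting all orbits.

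The first step is to identify $\mathcal{S}$ with $\{\xi_v : v \in \mathcal{C}_D \setminus \{0\}\}$. The inclusion $\xi_v \in \mathcal{S}$ follows from $\|F(\Phi^v_t(z))\| = \|F(z) + tv\| \to \infty$ together with $\Phi^v_t(z) \to \xi_v$. For the reverse inclusion, if $z_k \to \zeta \in \partial \B^n$ with $\|F(z_k)\| \to \infty$, then, passing to a subsequence, $F(z_k)/\|F(z_k)\|$ converges to a unit vector $v_\infty$ in the recession cone of $\overline D$, and a Julia-Wolff type lemma applied to $\Phi^{v_\infty}_t$ forces $\zeta = \xi_{v_\infty}$.

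The second step is to bound $|\mathcal{S}| \leq 2$. When the lineality space of $\mathcal{C}_D$ is trivial, every $\Phi^v_t$ is a genuine semigroup of $\B^n$ without interior fixed points, and classical Behan-type results on commuting fixed-point-free semigroups of $\B^n$ force all the Denjoy-Wolff points $\xi_v$ to coincide, giving case (2). When $\pm v \in \mathcal{C}_D$ for some $v$, the pullback $\Phi^v_t$ extends to a one-parameter group of automorphisms of $\B^n$ with exactly two fixed points $\xi_v \neq \xi_{-v}$ on $\partial \B^n$, and the commuting family structure pins all other Denjoy-Wolff points to $\{\xi_v, \xi_{-v}\}$, giving case (3). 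In this case, the existence of a line in $D$ triggers Muir-Suffridge's Theorem 2.13 from \cite{MS1}, which yields the strip form \eqref{formaMS}.

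The final step is to upgrade continuous extension on $\overline{\B^n} \setminus \mathcal{S}$ to a homeomorphism onto $\overline{D}$. Case (3) reduces to \eqref{formaMS} together with an inductive treatment of the lower-dimensional map $H$ by cases (1) and (2). Case (2) is the heart of the matter and the main obstacle: I would adapt the horosphere theory of \cite{BG} to the unbounded setting, constructing an inverse assignment from $\partial D$ to $\partial \B^n \setminus \{\xi\}$ through horoballs in $D$, and invoking the complex-tangential geometry at the Denjoy-Wolff point to establish both continuity and injectivity. The delicate point will be to rule out multiple boundary limits for sequences approaching $\xi$ along distinct complex-tangential directions, where horospheres become degenerate and the classical bounded-convex-domain techniques of \cite{BG} no longer apply verbatim.
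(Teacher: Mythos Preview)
Your semigroup/Denjoy--Wolff framework is the same as the paper's, but there are two genuine gaps.

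First, your case split in Step~2 is wrong: nontrivial lineality ($\pm v\in\mathcal{C}_D$) does \emph{not} force $\xi_v\neq\xi_{-v}$. Take $D=\{\Im z_1>\|z'\|^2\}$, the Siegel upper half-space: both $\pm e_1$ lie in $\mathcal{C}_D$, yet the Cayley biholomorphism $\B^n\to D$ has a \emph{single} infinite discontinuity, and the pulled-back group $\Phi^{e_1}_t$ is parabolic, not hyperbolic. The correct dichotomy is topological---the number of unbounded components of $D\setminus\overline{B(0,R)}$---and this is what the paper uses. When there is one component, even if $\pm v\in\mathcal{C}_D$ there must exist a third direction $w\neq\pm v$ (Lemma~\ref{two-components}), and then the commuting-maps result (Theorem~\ref{filippo}) plus the algebraic identity $s_0w+v=0$ forces $\xi_v=\xi_w=\xi_{-v}$. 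When there are two components, the paper shows $\xi_v\neq\xi_{-v}$ by a geodesic argument: if they coincided at $\xi$, the real Kobayashi geodesic in $\B^n$ joining $F^{-1}(z_0+tv)$ to $F^{-1}(z_0-tv)$ would, for large $t$, lie near $\xi$ and hence avoid the compact set $F^{-1}(D\cap\overline{B(0,R)})$, so its $F$-image would connect the two unbounded components of $D\setminus\overline{B(0,R)}$, a contradiction.

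Second, the continuous extension $\tilde F:\overline{\B^n}\to\overline{D}^\ast$ must be established \emph{before} Step~1, and your Step~1 already presupposes it: from $\Phi^v_t(z)\to\xi_v$ with $\|F(\Phi^v_t(z))\|\to\infty$ you get only one sequence witnessing blow-up at $\xi_v$, not $\lim_{z\to\xi_v}\|F(z)\|=\infty$. The paper proves continuous extension first (Proposition~\ref{continuous-ext}) via Gromov's shadowing lemma---real segments $[x_0,p_k]$ are uniform quasi-geodesics, so Kobayashi geodesics stay in a bounded $K_D$-neighborhood of them, and two distinct finite cluster values at one $\xi$ would violate Lemma~\ref{zimmer}. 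This is not horosphere theory; your ``delicate point'' about degenerate horospheres near $\xi$ is moot once continuity is in hand. Likewise, the homeomorphism upgrade is a short uniform argument in the paper (injectivity on finite boundary points from convexity of $F(E^{\B^n}(x_j,R))$, Lemma~\ref{keepcpnv}, plus elementary compactness), with no need for the explicit form~\eqref{formaMS} or induction on dimension.
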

Here, we say that an unbounded convex domain $D\subset \C^n$ has one connected component at $\infty$ if for every compact ball $B$, the  set $D\setminus B$ has one unbounded connected component, otherwise we say that it has two connected components at $\infty$ (see Section \ref{geo-convex} for precise definitions and statements). 

The Muir-Suffridge Conjecture \ref{conj} follows then  from Theorem \ref{main-intro}. Indeed, we show that a convex domain has two connected components at $\infty$ if and only if there exists $v\in \C^n$, $\|v\|=1$, such that $z+tv\subset D$ for all $z\in D$ and $t\in \R$, $v$ is the unique ``direction at $\infty$'' for $D$ and,  in such a case, $\lim_{t\to - \infty} F^{-1}(tv)\neq \lim_{t\to  \infty} F^{-1}(tv)$ (see Lemma \ref{two-components} and Proposition \ref{singular}). Therefore, up to composing with an automorphism of $\B^n$ in order to have the infinite discontinuities in $e_1$ and $-e_1$ and  post-composing with an affine transformation in such a way that $F(0)=0$ and $dF_0=\id$, $F$ has precisely the form \eqref{formaMS} by \cite[Theorem 2.13]{MS1}.

The proof of Theorem \ref{main-intro} uses many ingredients: the properties of real geodesics for the Kobayashi distance of $\B^n$, Gromov's theory of quasi-geodesics in $D$,  the theory of continuous one-parameter semigroups of $\B^n$ and that of commuting holomorphic self-maps of $\B^n$, and properties of horospheres and horospheres sequences of $D$.

We remark that, since all the ingredients we use are available for strongly convex domains with $C^3$ boundary, using Lempert's theory \cite{Lem}, one can prove that Theorem~\ref{main-intro} holds replacing $\B^n$ with any strongly convex domain with $C^3$ boundary. We leave the details of this generalization to the interested readers. 

The outline of the paper is the following. In Section \ref{geo-convex} we prove some simple geometrical properties of unbounded convex domains. In Section \ref{preli}, in order to make the paper as self-contained as possible, we collect the results and known facts we need for our proof. 

Then, in Proposition~\ref{continuous-ext} we show that $F$ extends continuously to $\partial\B^n$ with image  the closure of $D$ in the one-point compactification of $\C^n$. The proof of such a result uses Gromov's theory of quasi-geodesics and properties of the Kobayashi distance in $D$. 

Next, in Proposition~\ref{singular}, we deal with points in $\partial \B^n$ which are mapped to $\infty$ by $F$ and we prove that those points correspond to the connected components at $\infty$ of $D$ (which might be one or two). The proof uses results from the theory of continuous one-parameter semigroups in $\B^n$: indeed, if $\{p_n\}\subset D$ is a sequence converging to $\infty$ and $\frac{p_n}{\|p_n\|}\to v$, then $z+tv\in D$ for all $z\in D$ and all $t\geq 0$ (and we call such a $v$ a ``direction at $\infty$'' for $D$). Thus one can define a (continuous one-parameter) semigroup in $\B^n$ with no fixed points by setting $\phi_t(\zeta):=F^{-1}(F(\zeta)+tv)$, $\zeta\in \B^n$ and $t\geq 0$. Using horospheres and properties of horospheres defined by sequences in $D$, we show essentially that  the infinite discontinuities of $F$ are the Denjoy-Wolff points of such semigroups. We then conclude our proof by using the fact that  every two   ``directions  at $\infty$'' for $D$ give rise to two commuting  semigroups and commuting holomorphic self-maps of $\B^n$ do not share the same Denjoy-Wolff point, unless they have very particular forms. The case of two connected components at $\infty$ for $D$ is handled by using simple properties of real geodesics for the Kobayashi distance in $\B^n$. 

Finally, Theorem \ref{main-intro} is proved in Section \ref{final} (see Theorem \ref{main}).

\medskip

This paper was written while both authors were visiting the Center for Advanced Studies in Oslo for the  2016-17 CAS project {\sl Several Complex Variables and Complex Dynamics}. They both thank CAS for the support and for the wonderful atmosphere experienced there.

The authors also thank the referee for useful comments which improve the original manuscript.

\section{Geometry of unbounded convex domains}\label{geo-convex}

In this section we collect some simple results about geometry of unbounded convex domains.

If $x,y\in \C^n$, we denote by $[x,y]$ the real segment joining $x,y$, {\sl i.e.}, 
\[
[x,y]:=\{z\in \C^n: z=tx+(1-t)y, t\in [0,1]\}.
\]

For $z\in \C^n$ and $R>0$ we let 
\[
B(z,R):=\{w\in \C^n: \|w-z\|<R\}
\]
 be the Euclidean ball of center $z$ and radius $R$.

\begin{definition}
Let $D\subset \C^n$ be an unbounded convex domain. A vector $v\in \C^n$, $\|v\|=1$, is called a {\sl direction at $\infty$ for $D$} if there exists $x\in D$ such that $x+tv\in D$ for all $t\geq 0$.
\end{definition}

By convexity of $D$, if $v$ is a direction at $\infty$ for $D$, then for every $z\in D$ and all $t\geq 0$ it holds $z+tv\in D$  (see Lemma~2.1 in \cite{MS1}).

\begin{lemma}\label{two-components}
Let $D\subset \C^n$ be an unbounded convex domain. Then there exists at least one direction $v$ at $\infty$ for $D$.  Moreover 
\begin{enumerate}
\item either $D\setminus\overline{B(0,R)}$ has only one unbounded connected component for all $R>0$, 
\item or there exists $R_0>0$ such that $D\setminus\overline{B(0,R)}$ has two unbounded connected components for all $R\geq R_0$. This is the case if and only if the only directions at $\infty$ for $D$ are $v$ and $-v$. 
\end{enumerate}
\end{lemma}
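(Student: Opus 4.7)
The plan is to produce a direction at infinity by a normalized-limit argument, then reduce the two-component question to a dichotomy on the set $V$ of directions at infinity for $D$.

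\textbf{Existence.} Fix $x_0\in D$ and choose $\{p_n\}\subset D$ with $\|p_n\|\to\infty$. Setting $v_n=(p_n-x_0)/\|p_n-x_0\|$ and passing to a subsequence, $v_n\to v$ for some unit vector $v$. For any fixed $t\geq 0$ and $n$ large, $x_0+tv_n\in[x_0,p_n]\subset D$ by convexity, so $x_0+tv\in\overline{D}$; since $D$ is open and convex, writing $x_0+tv=\frac{s-t}{s}x_0+\frac{t}{s}(x_0+sv)$ for any $s>t$ puts $x_0+tv\in D$. Hence $v\in V$. The same convex-combination trick shows $V$ is closed and that $v,w\in V$ with $v+w\neq 0$ implies $(v+w)/\|v+w\|\in V$.

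\textbf{Core dichotomy.} I would prove: if $D\setminus\overline{B(0,R)}$ has at least two distinct unbounded connected components, then $V$ contains an antipodal pair $\{v,-v\}$. Take $p_n\in U_1$, $q_n\in U_2$ in two such components with $\|p_n\|,\|q_n\|\to\infty$, and pass to a subsequence so $p_n/\|p_n\|\to v_1\in V$ and $q_n/\|q_n\|\to v_2\in V$. Since the segment $[p_n,q_n]\subset D$ joins distinct components of $D\setminus\overline{B(0,R)}$ it must meet $\overline{B(0,R)}$, so there is $t_n\in(0,1)$ with $r_n:=t_np_n+(1-t_n)q_n$ of norm at most $R$. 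Boundedness of $\{r_n\}$ prevents $t_n$ from accumulating at $0$ or $1$ and forces $\mu:=\lim\|q_n\|/\|p_n\|=t_\infty/(1-t_\infty)\in(0,\infty)$. Dividing the identity by $\|p_n\|$ yields, in the limit, $t_\infty v_1+(1-t_\infty)\mu v_2=0$; since $\|v_1\|=\|v_2\|=1$, this forces $v_2=-v_1$.

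\textbf{Tube analysis.} If $\{v,-v\}\subset V$ then the line $\R v$ translates $D$ into itself, so (up to translation) $D=D'+\R v$ with $D'\subset v^\perp$ convex. If $V=\{v,-v\}$, any direction at infinity for $D'$ would lift to an element of $V$ outside $\R v$, a contradiction; hence $D'$ is bounded, say $D'\subset B(0,M)$, and for $R\geq M$ a direct computation shows $D\setminus\overline{B(0,R)}$ consists of the two convex unbounded pieces $\{(y,t):t>\sqrt{R^2-\|y\|^2}\}$ and $\{(y,t):t<-\sqrt{R^2-\|y\|^2}\}$. If instead $V\supsetneq\{v,-v\}$, any $w\in V\setminus\R v$ projects to a nonzero direction at infinity for $D'$, so $D'$ is unbounded; in this subcase I would show that $D\setminus\overline{B(0,R)}$ is connected for every $R>0$ via a \emph{path through infinity}: slide a given point along whichever of $\pm v$ has non-negative inner product against it (so origin-distance is non-decreasing) until $|t|$ is large, then move freely in the $y$-direction at large height (where the origin-distance trivially exceeds $R$), and bridge the very-positive-$t$ and very-negative-$t$ regions through a point $(y^\ast,\cdot)$ with $\|y^\ast\|>R$, available from unboundedness of $D'$.

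\textbf{Conclusion and main obstacle.} The remaining case, in which $V$ contains no antipodal pair, follows from the contrapositive of the core dichotomy: at most one unbounded component, with existence automatic from $D$ being unbounded. Assembling, $V=\{v,-v\}$ corresponds precisely to alternative (2) with $R_0=M$, and every other configuration of $V$ yields (1). The main obstacle I expect is the tube-with-unbounded-base subcase: $\overline{B(0,R)}$ can cut the tube in a nontrivial way, and producing the bypass path requires simultaneously using both $\pm v$ and an unbounded direction of $D'$ in a carefully ordered sequence of moves that never drops the distance from the origin below $R$.
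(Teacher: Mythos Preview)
Your proof is correct (modulo two small imprecisions noted below) and takes a genuinely different route from the paper. Both begin with the same normalized-limit argument for existence, but then diverge. The paper never introduces the tube decomposition: it shows directly that if $D$ admits two $\R$-linearly independent directions $v,w$ at infinity then for $a,b$ large the segment $[z_0+av,\,z_1+bw]$ avoids $\overline{B(0,R)}$ and therefore links any two unbounded components (hence at most one); for the converse it argues by contradiction that if $V=\{v,-v\}$ yet $D\setminus\overline{B(0,R)}$ had a single unbounded component for every $R$, then a path in that component from $z_0+t_Rv$ to $z_0-t_Rv$ would have to cross the real hyperplane $H$ through $z_0$ orthogonal to $v$, producing points of $H\cap D$ of norm exceeding every $R$ and hence a direction at infinity lying in $v^\perp$. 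Your approach instead extracts the structural fact that an antipodal pair in $V$ forces $D=D'+\R v$, and then splits on whether the base $D'$ is bounded. This buys you the extra geometric conclusion that case~(2) is precisely the case of a tube over a bounded convex base, at the price of the explicit path-through-infinity construction you flag as the main obstacle; the paper's hyperplane-crossing trick sidesteps that construction entirely. Two small repairs: in the core dichotomy, $t_n$ may well accumulate at $0$ or $1$ and $\mu$ at $0$ or $\infty$ unless you first arrange $\|p_n\|\asymp\|q_n\|$ (or normalize by $t_n\|p_n\|+(1-t_n)\|q_n\|$ rather than $\|p_n\|$); either fix still yields $v_2=-v_1$. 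And the two pieces of the bounded-base tube minus a large ball are connected but not convex, since $y\mapsto\sqrt{R^2-\|y\|^2}$ is concave rather than convex.
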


\begin{proof}
Since $D$ is unbounded, there exists a sequence $\{p_k\}\subset D$ such that $\lim_{k\to \infty}\|p_k\|=\infty$. Up to subsequences, we can assume that $\lim_{k\to \infty}\frac{p_k}{\|p_k\|}=v$. Let $z\in D$. Since $D$ is convex, the real segment $[z,p_k]\subset D$ for all $k$.  Hence, since $\{z+tv,\ t \in \mathbb R^+\}$ is the limit, for the local Hausdorff convergence, of the segments $[z,p_k]$, then $\{z+tv,\ t \in \mathbb R^+\}$ is contained in $\overline{D}$. Finally, since $z \in D$, by convexity of $D$ it follows that $z+tv\in D$ for all $t\geq 0$.

Next, assume that there exists $R>0$ such that  $D\setminus\overline{B(0,R)}$ is not connected. We claim that $D\setminus\overline{B(0,R)}$ has at most two unbounded components. Indeed, if $U$ is an unbounded connected component of $D\setminus\overline{B(0,R)}$ and $\{p_k\}\subset U$ converges to $\infty$ and $\lim_{k\to \infty}\frac{p_k}{\|p_k\|}=v$ then for every $z\in U$ such that $\|z\|>R$ and for every $t\geq 0$ it holds $z+tv\in U$. Hence, for every unbounded connected component of $D\setminus\overline{B(0,R)}$ there exists $v\in \C^n$, $\|v\|=1$, such that $z+tv$ belongs to such a component for every $t\geq 0$ and some $z\in D$. If the unbounded components were more than two  there would exist two components $U$ and $U'$ and two directions $v$ and $w$  at $\infty$ for $D$ which are $\R$-linearly independent and such that $z_0+tv\in U$ for all $t\geq 0$ and some $z_0\in D$, and $z_1+tw\in U'$ for all $t\geq 0$ and some $z_1\in D$. But then, since $v$ and $w$ are $\R$-linearly independent, for $a, b$ sufficiently large the intersection $[z_0+av, z_1+bw]\cap \overline{B(0,R)}$ is empty and connects  $U$ with $U'$, a contradiction. 

Therefore, if $D\setminus\overline{B(0,R)}$ is not connected, then it has at most two unbounded connected components. If $D\setminus\overline{B(0,R)}$ contains two unbounded connected components, then it is easy to see that for every $R'>R$, also $D\setminus\overline{B(0,R')}$ contains two unbounded connected components. 

Moreover, the previous argument shows that if there are two $\R$-linearly independent directions at $\infty$, then for every $R>0$, $D\setminus\overline{B(0,R)}$ has only one unbounded connected component.  

Therefore, if 
$D\setminus\overline{B(0,R)}$ has two unbounded connected components, then there are  only two directions at $\infty$ for $D$, namely,  $v$ and $-v$ for some $v\in \C^n$, $\|v\|=1$. 

Conversely, assume $v, -v$ are the only directions at $\infty$ for $D$. Suppose by contradiction that  for every $R>0$ the open set $D\setminus\overline{B(0,R)}$ had only one unbounded connected component. Let $z_0\in D$, and $R>\|z_0\|$. Then there exists $t_R\in (0,\infty)$ such that   $z_0+tv,z_0-tv\in D\setminus\overline{B(0,R)}$ for all $t\geq t_R$.  Since $D\setminus\overline{B(0,R)}$ has only one unbounded connected component, the points  $z_0+t_Rv$ and $z_0-t_Rv$ can be joined by a continuous path $\gamma_R$ in $D\setminus\overline{B(0,R)}$. Let $H$ be the real affine hyperplane through $z_0$ orthogonal to $v$. Then, by construction, $H\cap \gamma_R\neq \emptyset$ for all $R\geq \|z_0\|$. Therefore, there exists a sequence $\{p_k\}\subset H\cap D$ converging to $\infty$ such that $w:=\lim_{k\to \infty}\frac{p_k}{\|p_k\|}$ is a direction at $\infty$ for $D$ which is $\R$-linearly independent of $v$, a contradiction.
\end{proof}

The previous lemma allows us to give the following definition:

\begin{definition}
Let $D\subset \C^n$ be an unbounded convex domain. We say that {\sl $D$ has one connected component at $\infty$} if for every $R>0$ the open set $D\setminus\overline{B(0,R)}$ has only one unbounded connected component. Otherwise we say that {\sl $D$ has two connected components at $\infty$}.
\end{definition}

\section{Hyperbolic geometry of the unit ball}\label{preli}

In this section we briefly recall what we need from hyperbolic geometry of the unit ball $\B^n$. We refer the reader to \cite{Abate, Kob} for details.

\subsection{Kobayashi distance and real Kobayashi geodesics} Given a domain $D\subset \C^n$, for every $z,w\in D$, we denote by $K_D(z,w)$ the {\sl Kobayashi distance} in $D$ between $z$ and $w$. If $D=\D\subset \C$ the unit disc, then $K_\D$ is the Poincar\'e distance of $\D$. If $D_1, D_2\subset \C^n$  are two domains and $f: D_1\to D_2$ is a biholomorphism, then $K_{D_1}(z,w)=K_{D_2}(f(z), f(w))$ for all $z,w\in D_1$. 

Let $D\subset \C^n$. A {\sl real (Kobayashi) geodesic} for $D$ is a piecewise $C^1$ curve $\gamma:(a,b)\to D$ such that for every $t,t'\in (a,b)$ it holds $K_D(\gamma(t), \gamma(t'))=|t-t'|$, where $-\infty\leq a<b\leq +\infty$. 

In the unit ball $\B^n$, given $z,w\in \B^n$, there exists a real geodesic $\gamma:[a,b]\to \B^n$ such that $\gamma(a)=z$ and $\gamma(b)=w$. Such a real geodesic is {\sl unique up to reparametrization}, in the sense that, if $\tilde\gamma:[\tilde a,\tilde b]\to \B^n$ is another real geodesic such that $\tilde \gamma(\tilde a)=z$ and $\tilde \gamma(\tilde b)=w$, then there exists  $c\in \R$ such that $\tilde \gamma(t)=\gamma(\pm t+c)$.  

The image of the real geodesic  joining $z,w\in \B^n$ can be described as follows. Let $\Delta:=\B^n\cap (\C(z-w)+w)$. Then $\Delta$ is an affine disc contained in the affine complex line $\C(z-w)+w$. Therefore, there exists an affine biholomorphic map $\varphi:\D \to \Delta$. Up to precomposing $\varphi$ with an automorphism of the unit disc, we can assume that $\varphi(0)=z$ and $\varphi(t)=w$ for some $t>0$. Then, $\gamma([a,b])=\varphi([0,t])$. In particular, it means that $\gamma([a,b])$ is either a real segment contained in a real line passing through the center of $\Delta$, or an arc of a circle in $\C(z-w)+w$ orthogonal to  the boundary of $\Delta$ within the set $\C(z-w)+w$. In other words, real geodesics in $\B^n$ are exactly the real geodesics for the Poincar\'e distance in $\Delta$.

In particular, if $D\subset\C^n$ is a domain biholomorphic to $\B^n$, then for every $z,w\in D$ there exists a unique (up to reparametrization) real (Kobayashi) geodesic joining $z$ and $w$. Moreover, if $F:\B^n\to D$ is a biholomorphism and $\gamma:[a,b]\to \B^n$ is a real geodesic in $\B^n$ joining $F^{-1}(a)$ and $F^{-1}(b)$, then $F\circ \gamma:[a,b]\to D$ is a real geodesic in $D$ joining $z$ and $w$.

By the previous consideration, it follows easily that, given $\xi\in \partial \B^n$ and $x_0\in \B^n$, there exists a unique real geodesic $\gamma_\xi:[0,+\infty)\to \B^n$ such that $\gamma_\xi(0)=x_0$ and $\lim_{t\to\infty}\gamma_\xi(t)=\xi$. Moreover, let $x_0\in \B^n$ and let $\{\zeta_k\}\subset \overline{\B^n}$ be a sequence converging to some point $\zeta\in \partial\B^n$. For each $k\in \N$, denote by $\gamma_k:[0,a_k)\to \B^n$ the unique real geodesic such that $\gamma_k(0)=x_0$ and either $\gamma_k(a_k)=\zeta_k$ in case $\zeta_k\in \B^n$ (and in such a case, necessarily $0<a_k<\infty$), or $\lim_{t\to \infty}\gamma_k(t)=\zeta_k$ in case $\zeta_k\in \partial\B^n$ (and in such a case, necessarily $a_k=\infty$). Finally, let $\gamma:[0,\infty)\to \B^n$ be the only real geodesic such that $\gamma(0)=x_0$ and $\lim_{t\to \infty}\gamma(t)=\zeta$. Then $\{\gamma_k\}$ converges uniformly on compacta of $[0,\infty]$ to $\gamma$. In other words, for every $\epsilon>0$ and $R>0$ there exists $k_0\in \N$ such that $a_k\geq R$ for every $k\geq k_0$ and, for every $s\in [0,R]$ and every $k\geq k_0$, it holds $K_{\B^n}(\gamma(s),\gamma_k(s))<\epsilon$.

Finally, if $\xi_1, \xi_2\in \partial \B^n$, $\xi_1\neq \xi_2$, there exists a unique (up to reparametrization) real geodesic $\gamma:(-\infty,+\infty)\to \B^n$ such that $\lim_{t\to -\infty}\gamma(t)=\xi_1$ and $\lim_{t\to \infty}\gamma(t)=\xi_2$. By the previous considerations, it is easy to see that, if  $\{\xi_k\}\subset \partial\B^n$ is a sequence converging to $\xi\in \partial \B^n$ and $\gamma_k:(-\infty,+\infty)$ is the unique (up to reparametrization) real geodesic whose closure contains $\xi_k$ and $\xi$, then for every  $\epsilon>0$ there exists $k_0\in \N$ such that $\gamma_k(-\infty,+\infty)\subset B(\xi, \epsilon)$ for all $k\geq k_0$.

\subsection{Horospheres} For $\xi\in \partial \B^n$ and $R>0$, let
\[
 E^{\B^n}(\xi,R):=\{z\in \B^n: \frac{|1-\langle z,\xi\rangle|^2}{1-\|z\|^2}<R\}.
\] 
The open set $E^{\B^n}(\xi,R)$ is called a {\sl horosphere of center $\xi$ and radius $R>0$}, and it is a complex ellipsoid affinely biholomorphic to $\B^n$. For the aim of this paper,  we need to recall the following properties of horospheres (see, {\sl e.g.}, \cite[Section 2]{Abate} or \cite{BG}):
\begin{enumerate}
\item $\overline{E^{\B^n}(\xi,R)}\cap \partial \B^n=\{\xi\}$ for every $R>0$, 
\item $\bigcup_{R>0} E^{\B^n}(\xi,R)=\B^n$,
\item $\bigcap_{R>0}\overline{E^{\B^n}(\xi,R)}=\{\xi\}$,
\item if $0<R<R'$ then $E^{\B^n}(\xi,R)\subset E^{\B^n}(\xi,R')$,
\item if $\xi_1, \xi_2\in \partial \B^n$ and $\xi_1\neq\xi_2$, then there exists $R>0$ such that $E^{\B^n}(\xi_1,R)\cap E^{\B^n}(\xi_2,R)=\emptyset$.
\end{enumerate}
If $\xi\in \partial \B^n$ and $\{\zeta_k\}\subset \B^n$ is any sequence converging to $\xi$, then for every $R>0$ it holds (see \cite[Prop. 2.2.20]{Abate}):
\[
E^{\B^n}(\xi,R)=\{z\in \B^n: \limsup_{k\to \infty}[K_{\B^n}(z,\zeta_k)-K_{\B^n}(0,\zeta_k)]<\frac{1}{2}\log R\}.
\]
If $F:\B^n\to D$ is a biholomorphism, and $\{z_k\}\subset D$ is any sequence such that $\{F^{-1}(z_k)\}$ converges to some $\xi\in\partial \B^n$, recalling that $F$ is an isometry with respect to the Kobayashi distance, the previous equation allows us to define the {\sl horosphere in $D$ of radius $R>0$, base point $F(0)$ relative to the sequence $\{z_k\}$} by setting
\begin{equation*}
\begin{split}
E^D(\{z_k\}, R)&:=F(E^{\B^n}(\xi, R))\\&=\{z\in D: \limsup_{k\to \infty}[K_{D}(z,z_k)-K_{D}(F(0),z_k)]<\frac{1}{2}\log R\}.
\end{split}
\end{equation*}
Horospheres defined using sequences can be used in general for defining a new topology (the {\sl horosphere topology}) and a new boundary for complete hyperbolic manifolds (see \cite{BG}). We content here to state the following result which is needed later on (see \cite[Proposition 6.1]{BG}):

\begin{lemma}\label{keepcpnv}
Let $F:\B^n \to D$ be a biholomorphism. Suppose $D\subset \C^n$ is a convex domain. Let $\{z_k\}\subset D$ be a sequence such that $\{F^{-1}(z_k)\}$ converges to some $\xi\in \partial \B^n$. Then for every $R>0$ the horosphere $E^D(\{z_k\}, R)$ is convex.
\end{lemma}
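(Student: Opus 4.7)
The plan is to reduce the convexity of $E^D(\{z_k\}, R)$ to the convexity of Kobayashi balls in a convex domain, by exploiting the $\limsup$ characterization of the horosphere displayed right before the lemma.

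The one nontrivial input I would invoke is the following well-known fact: if $D \subset \C^n$ is a convex domain and $w \in D$, then for every $r > 0$ the Kobayashi ball $\{z \in D : K_D(z, w) < r\}$ is convex; equivalently, for all $z_1, z_2 \in D$ and all $t \in [0, 1]$,
\begin{equation*}
K_D\bigl(tz_1 + (1-t)z_2,\, w\bigr) \;\leq\; \max\bigl\{ K_D(z_1, w),\, K_D(z_2, w) \bigr\}.
\end{equation*}
This is a consequence of Lempert's theorem ($K_D$ equals the Lempert function on convex domains) together with the standard averaging construction: given near-extremal analytic discs $f_i : \D \to D$ with $f_i(0) = z_i$ and $f_i(\tau_i) = w$, one rotates and precomposes with Schwarz maps $\lambda \mapsto (\tau_i / \rho) \lambda$, where $\rho := \max_i |\tau_i| < \tanh r$, to bring the landing parameters to a common $\rho$, and then takes $\lambda \mapsto t f_1(\lambda) + (1-t) f_2(\lambda)$, which by convexity of $D$ is an analytic disc in $D$ from $t z_1 + (1-t) z_2$ to $w$ of Lempert-parameter $\leq K_\D(0, \rho) < r$. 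For unbounded convex $D$ one first exhausts by bounded convex subdomains $D_j \nearrow D$ and uses $K_{D_j} \searrow K_D$.

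Granting this inequality, the lemma is immediate. Fix $z_1, z_2 \in E^D(\{z_k\}, R)$, $t \in [0, 1]$, and set $w_t := tz_1 + (1-t)z_2$. Applying the inequality above with $w = z_k$ and subtracting $K_D(F(0), z_k)$ from each side gives
\begin{equation*}
K_D(w_t, z_k) - K_D(F(0), z_k) \;\leq\; \max_{i \in \{1,2\}} \bigl[ K_D(z_i, z_k) - K_D(F(0), z_k) \bigr]
\end{equation*}
for every $k$. Passing to $\limsup_{k \to \infty}$ and using $\limsup_k \max\{a_k, b_k\} = \max\{\limsup_k a_k,\, \limsup_k b_k\}$, the right-hand side stays strictly below $\tfrac12 \log R$ because both $z_1$ and $z_2$ lie in $E^D(\{z_k\}, R)$. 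Hence $w_t \in E^D(\{z_k\}, R)$, as required.

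The main obstacle is really just the convex-ball statement itself; the $\limsup$ manipulation is elementary. The delicate point I would want to verify carefully is that Lempert's theorem and the averaging argument apply in the required generality, namely for a possibly unbounded convex $D$, which is what the exhaustion remark above is meant to handle.
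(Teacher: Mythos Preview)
Your argument is correct. The paper itself does not prove this lemma but simply cites \cite[Proposition~6.1]{BG}; your direct reduction to the convexity of Kobayashi balls in a convex domain---which indeed follows from Lempert's theorem together with the averaging-of-discs construction you outline, extended to unbounded $D$ by exhaustion---is the standard proof and is presumably what appears in the cited reference. The $\limsup$ manipulation is valid as stated.
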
 

\subsection{Iteration in the unit ball} Let $f:\B^n\to \B^n$ be a holomorphic map. If $f$ has no fixed points in $\B^n$ (namely, $f(z)\neq z$ for every $z\in \B^n$), then by  the Denjoy-Wolff theorem for $\B^n$ (due to M. Herv\'e \cite{He}, see also \cite{Mac} and  \cite[Theorem 2.2.31]{Abate}), there exists a unique point $\tau\in\partial\B^n$, which is called the {\sl Denjoy-Wolff point of $f$}, such that for every $z\in \B^n$ it holds $\lim_{k\to \infty}f^{\circ k}(z)=\tau$, where $f^{\circ k}:=f^{\circ (k-1)}\circ f$, $f^{\circ 1}=f$. In our argument we need the following result about Denjoy-Wolff points of commuting mappings (see \cite[Theorem~3.3]{Br}):

\begin{theorem}\label{filippo}
Let $f, g:\B^n\to \B^n$ be holomorphic and assume $f\circ g=g\circ f$. Let $\tau\in \partial \B^n$ be the Denjoy-Wolff point of $f$ and let $\sigma\in \partial \B^n$ be the Denjoy-Wolff point of $g$. Suppose $\tau\neq \sigma$. Let $\Delta:=\B^n\cap (\C(\sigma-\tau)+\tau)$. Then $f(\Delta)=g(\Delta)=\Delta$ and $f|_{\Delta}, g|_{\Delta}$ are hyperbolic automorphisms of $\Delta$ with fixed points $\sigma$ and $\tau$.
\end{theorem}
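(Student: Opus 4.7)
The plan is to show $\Delta$ is invariant under both $f$ and $g$, and then to reduce to the classical theory of commuting holomorphic self-maps of the unit disc. The main tool is Julia's lemma in $\B^n$: for any holomorphic self-map $h$ of $\B^n$ with Denjoy-Wolff point $\eta\in\partial\B^n$, there is a boundary dilation $\alpha_h\in(0,1]$ such that $h(E^{\B^n}(\eta,R))\subset E^{\B^n}(\eta,\alpha_h R)$ for every $R>0$. Applied to $f$ and $g$, this gives $f$-invariance of every horosphere at $\tau$ and $g$-invariance of every horosphere at $\sigma$.

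The first step is to use commutativity to transfer horosphere information across the two boundary points. Putting $z_k := f^{\circ k}(0)$, which converges to $\tau$ by the Denjoy--Wolff theorem, the Schwarz--Pick contraction of $g$ combined with $g(z_k)=f^{\circ k}(g(0))$ yields
\begin{align*}
K_{\B^n}(g(w),z_k) &\le K_{\B^n}(g(w),g(z_k))+K_{\B^n}(g(z_k),z_k)\\
&\le K_{\B^n}(w,z_k)+K_{\B^n}(g(0),0).
\end{align*}
Subtracting $K_{\B^n}(0,z_k)$ and taking $\limsup_k$ shows that $g$ maps the sequence-horosphere at $\tau$ determined by $\{z_k\}$ into a larger horosphere of the same type. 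Because these sequence-horospheres contain the standard $E^{\B^n}(\tau,\cdot)$, it follows that $g$ admits the K-limit $\tau$ at $\tau$. Julia's lemma applied to $g$ at $\tau$ then produces a finite boundary dilation $\beta_g>0$ with $g(E^{\B^n}(\tau,R))\subset E^{\B^n}(\tau,\beta_g R)$ for every $R>0$. By a symmetric argument, $f(E^{\B^n}(\sigma,S))\subset E^{\B^n}(\sigma,\beta_f S)$ for some $\beta_f>0$ and every $S>0$.

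The heart of the argument is the passage from these two horosphere-invariance statements to the genuine invariance of $\Delta$. The disc $\Delta$ is the unique complex geodesic of $\B^n$ whose closure meets $\partial\B^n$ at both $\tau$ and $\sigma$; its intersection with each horosphere $E^{\B^n}(\tau,R)$ is a one-dimensional horodisc of $\Delta$ centered at $\tau$, and similarly for horospheres at $\sigma$. Exploiting the fact that every horosphere at $\tau$ meets $\partial\B^n$ only at $\tau$, together with the analogous statement at $\sigma$, one concludes that $g(\Delta)$ is contained in a complex analytic subset whose closure meets $\partial\B^n$ exactly at $\{\tau,\sigma\}$; the rigidity of complex geodesics in $\B^n$ (there being a unique complex geodesic with prescribed pair of distinct boundary endpoints) forces $g(\Delta)\subset\Delta$. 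The same reasoning yields $f(\Delta)\subset\Delta$.

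Once $\Delta$ is known to be invariant, the restrictions $f|_\Delta$ and $g|_\Delta$ are commuting holomorphic self-maps of $\Delta\cong\D$ whose respective Denjoy--Wolff points are the distinct boundary points $\tau,\sigma\in\partial\Delta$. By the classical theorem of Behan on commuting holomorphic self-maps of the disc, two such maps with distinct Denjoy--Wolff points on $\partial\D$ must both be hyperbolic M\"obius transformations sharing the same two boundary fixed points; hence $f|_\Delta$ and $g|_\Delta$ are hyperbolic automorphisms of $\Delta$ fixing $\{\tau,\sigma\}$, as claimed. The main obstacle in this plan is the geometric step of passing from horosphere invariance at two distinct boundary points to invariance of the complex geodesic joining them; this is where the rigidity of ball geometry is crucial and requires the most care.
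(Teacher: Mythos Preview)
The paper does not prove this theorem: it is quoted verbatim from \cite[Theorem~3.3]{Br} and used as a black box. So there is no ``paper's own proof'' to compare against; your proposal is in effect an attempt to reconstruct the argument of \cite{Br}.

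Your Steps~1, 2 and 4 are fine and are indeed the skeleton of the original proof. The commutativity estimate you write down does give $g\big(E^{\B^n}(\tau,R)\big)\subset E^{\B^n}(\tau,\beta_g R)$ for some finite $\beta_g>0$ and all $R>0$, and symmetrically for $f$ at $\sigma$; and once $\Delta$ is invariant, Behan's theorem in $\D$ finishes the job exactly as you say.

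The genuine gap is your Step~3, and you flag it yourself. The sentence ``the rigidity of complex geodesics in $\B^n$ \dots forces $g(\Delta)\subset\Delta$'' is not a proof: knowing only that $g\circ\varphi:\D\to\B^n$ has non-tangential limits $\tau$ and $\sigma$ at two boundary points does \emph{not} force its image into $\Delta$. What is actually used in \cite{Br} is a quantitative horosphere computation. After an automorphism one may take $\tau=e_1$, $\sigma=-e_1$, $\Delta=\D\times\{0\}$; then a direct calculation shows that for $R,S>0$ with $RS\le 1$ the set $\overline{E^{\B^n}(e_1,R)}\cap\overline{E^{\B^n}(-e_1,S)}$ is either empty or a single point of the real geodesic $(-1,1)\times\{0\}\subset\Delta$. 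To exploit this one must control the \emph{product} $\alpha_g\beta_g$ of the two boundary dilations of $g$ (at $\sigma$ and at $\tau$), not each factor separately; $\alpha_g\le 1$ because $\sigma$ is the Denjoy--Wolff point, but $\beta_g$ may well exceed $1$. The missing ingredient is the chain-rule/composition behaviour of boundary dilations together with the analogous inequalities for $f$ and for the iterates, which in \cite{Br} is what pins down $g(\Delta)\subset\Delta$. Without that, your Step~3 remains an assertion, and the proof is incomplete at precisely the point you identify as ``the main obstacle.''
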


Note that $\Delta$ in the previous theorem is a disc contained in the affine complex line $\C(\sigma-\tau)+\tau$, hence there exists an affine biholomorphism $\varphi:\D\to \Delta$, and, saying that $f|_\Delta$ is a hyperbolic automorphism of $\Delta$, we mean that $\varphi^{-1}\circ f\circ \varphi:\D \to \D$ is a hyperbolic automorphism of $\D$. Recall also that a hyperbolic automorphism of $\D$ is an automorphism of $\D$ (hence a M\"obius transform) having exactly two fixed points on $\partial \D$ and no fixed points in $\D$. 

Finally, recall that a continuous one-parameter group $(h_t)$ of hyperbolic automorphisms of $\D$ is a continuous groups-homomorphism between the additive group of real numbers $\R$ endowed with the Euclidean topology and the group of automorphisms of $\D$ endowed with the topology of uniform convergence on compacta, such that, for every $t\neq 0$, the automorphism $h_t$ is hyperbolic (see, {\sl e.g.}, \cite{Abate} or \cite{Sh}). Then, there exist $\tau,\sigma\in \partial \D$, $\tau\neq \sigma$ such that $h_t(\tau)=\tau$  and $h_t(\sigma)=\sigma$ for all $t\in\R$. Also, $\tau$ is the Denjoy-Wolff point of $h_t$ for all $t>0$, while $\sigma$ is the Denjoy-Wolff point of  $h_t$ for all $t<0$ (or vice versa).

\begin{remark}\label{metocca}
Let $\gamma:(-\infty,+\infty)\to \D$ be a real (Poincar\'e) geodesic such that $\lim_{s\to -\infty}\gamma(s)=\tau$ and $\lim_{s\to \infty}\gamma(s)=\sigma$, and let $\Gamma:=\gamma(-\infty,+\infty)$. Since $h_t$ is an isometry for $K_\D$  and $\tau, \sigma$ are fixed points for $h_t$ for all $t\in \R$, then $h_t(\Gamma)=\Gamma$. Moreover, for $t>0$, given $s\in \R$ there exists $s'\in (-\infty,s)$ such that $h_t(\gamma(s))=\gamma(s')$ (that is, $h_t(\gamma(s))$ is closer to $\tau$ than $\gamma(s)$ along $\gamma$), while,  for $t<0$, given $s\in \R$ there exists $s'\in (s,\infty)$ such that $h_t(\gamma(s))=\gamma(s')$. Also, for every $\zeta_0,\zeta_1\in \Gamma$ there exists $t\in \R$ such that $h_t(\zeta_0)=\zeta_1$.
\end{remark}

\section{Extension of convex maps}

In this section we prove that every convex map of the unit ball extends continuously up to the boundary  from the closed unit ball to the  one-point compactification of $\C^n$. To this aim, we need some preliminary lemmas.

The following lemma was proved in \cite[Lemma 6.16]{BG}:
\begin{lemma}\label{zimmer}
Let $F:\B^n\to D$ be a biholomorphism. Suppose $D$ is convex. Let $p, q\in \partial D$, $p\neq q$. Then for every sequences $\{p_n\}, \{q_n\}\subset D$ such that $\lim_{n\to \infty} p_n=p$ and $\lim_{n\to \infty} q_n=q$ it holds
\[
\lim_{n\to \infty} K_D(p_n,q_n)=\infty.
\] 
\end{lemma}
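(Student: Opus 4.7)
The plan is to use the convexity of $D$ to separate $p$ from $q$ by a complex supporting affine hyperplane at $p$, producing a holomorphic map from $D$ into a half-plane of $\C$. The distance-decreasing property of the Kobayashi pseudodistance will then reduce the estimate to the explicit Poincar\'e distance on a half-plane.

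Concretely: since $D$ is open and convex and $p\in\partial D$, the Hahn--Banach theorem produces a real affine hyperplane supporting $D$ at $p$. Because every real affine hyperplane of $\C^n$ has the form $\{\Re\ell=c\}$ for some complex linear $\ell:\C^n\to\C$ and some $c\in\R$, we obtain such $\ell$ and $c$ with $\Re\ell(z)<c$ for every $z\in D$ and $\Re\ell(p)=c$. Viewing $\ell$ as a holomorphic map $\ell:D\to\mathbb H:=\{w\in\C:\Re w<c\}$, the distance-decreasing property of $K_D$ gives
\[
K_D(p_n,q_n)\;\geq\;K_{\mathbb H}(\ell(p_n),\ell(q_n)).
\]
Now $\ell(p_n)\to\ell(p)\in\partial\mathbb H$, and the Poincar\'e distance on $\mathbb H$ forces $K_{\mathbb H}(\ell(p_n),\ell(q_n))\to\infty$ whenever either $\Re\ell(q)<c$ (so $\ell(q_n)$ stays in a compact subset of $\mathbb H$) or $\Re\ell(q)=c$ with $\ell(p)\neq\ell(q)$ (so the two boundary limits are distinct points of $\partial\mathbb H$). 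By the symmetric argument applied to a supporting functional at $q$, the conclusion holds as soon as \emph{some} complex supporting functional at $p$ or at $q$ separates $\ell(p)$ from $\ell(q)$.

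The main obstacle is the \emph{degenerate configuration} in which every complex supporting functional $\ell$ at $p$ (and, by symmetry, at $q$) satisfies $\ell(p-q)=0$. In that case the complex line $L:=p+\C(q-p)$ is contained in every such supporting real hyperplane; since $D\subset\{\Re\ell<c\}$ while $L\subset\{\Re\ell=c\}$, we would have $L\cap D=\emptyset$, and by convexity of $D$ the entire real segment $[p,q]$ would then lie in $\partial D$. I would rule this case out using that $D$ is biholomorphic to $\B^n$ and hence complete hyperbolic, so in particular it contains no complex affine line: iterating the Hahn--Banach construction at points of $[p,q]\subset\partial D$ and exploiting this rigidity should produce a complex support functional that genuinely separates $p$ from $q$, reducing us to the generic case above. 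A more computational alternative is to invoke a Mercer-type sharp lower estimate for the Kobayashi distance of a convex domain, of the form $K_D(z,w)\geq\tfrac{1}{2}\log\!\bigl(1+\|z-w\|^2/(d(z,\partial D)\,d(w,\partial D))\bigr)$, which yields the conclusion at once since $\|p_n-q_n\|\to\|p-q\|>0$ whereas $d(p_n,\partial D),\,d(q_n,\partial D)\to 0$.
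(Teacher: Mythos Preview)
The paper does not itself prove this lemma; it quotes the result from \cite[Lemma~6.16]{BG}, so there is no in-paper argument to compare your proposal against.

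Your supporting-hyperplane projection handles the generic case correctly, but the degenerate configuration $\ell(p)=\ell(q)$ is a genuine gap, and neither of your two suggested fixes closes it. Consider the convex domain $D=\{(z_1,z_2)\in\C^2:\Re z_1>(\max(0,|z_2|-1))^2\}$ with $p=(0,0)$, $q=(0,\tfrac12)\in\partial D$. The only supporting real hyperplane at either point is $\{\Re z_1=0\}$, so $\ell(z)=z_1$ and $\ell(p)=\ell(q)=0$; no Hahn--Banach iteration will ever produce a separating complex functional, because the complex line $\{0\}\times\C$ through $p,q$ is genuinely disjoint from $D$. With $p_n=(\tfrac1n,0)$ and $q_n=(\tfrac1n,\tfrac12)$, the product inclusion $\{\Re z_1>0\}\times\D\subset D$ gives
\[
K_D(p_n,q_n)\;\le\;K_{\{\Re z_1>0\}\times\D}(p_n,q_n)\;=\;\max\bigl(0,\,K_\D(0,\tfrac12)\bigr)\;=\;K_\D(0,\tfrac12),
\]
a constant independent of $n$. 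Thus the conclusion of the lemma \emph{fails} for this convex $D$, and the same example refutes your quoted Mercer-type inequality: here $\|p_n-q_n\|=\tfrac12$ while $d(p_n,\partial D)=d(q_n,\partial D)=\tfrac1n$, so the right-hand side would be $\tfrac12\log(1+n^2/4)\to\infty$. That lower bound is simply not valid for arbitrary convex domains.

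The moral is that the hypothesis that $D$ be biholomorphic to $\B^n$ is essential; an argument using convexity alone cannot succeed, because convexity alone permits exactly the degenerate situation above (an analytic disc in $\partial D$ through $p$ and $q$). Your proposal never makes substantive use of the biholomorphism $F$---for instance via the Gromov hyperbolicity of $(D,K_D)$ inherited from $(\B^n,K_{\B^n})$, which is the route taken in \cite{BG}---and that is the missing idea.
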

 
If $V\subset D$ and $\epsilon>0$, we let
\[
\mathcal N_\epsilon(V):=\{z\in D: \exists w\in V, K_D(z,w)<\epsilon\}.
\]

\begin{lemma}\label{herve}
Let $F:\B^n\to D$ be a biholomorphism. Suppose $D$ is convex. Let $x\in D$ and let $p\in \partial D$. Then there exist an open set $U$ containing $p$ and $M>0$  such that  for every sequence $\{p_k\}\subset D\cap U$ converging to $p$, the real (Kobayashi) geodesic $\gamma_k:[0,R_k]\to D$ such that 
$\gamma_k(0)=x$ and $\gamma_k(R_k)=p_k$ satisfies
\[
\gamma_k(s)\in \mathcal N_M([x, p_k]) \quad \forall s\in [0, R_k].
\]
\end{lemma}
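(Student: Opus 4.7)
My plan is to deduce the lemma from Gromov's stability of quasi-geodesics (the Morse Lemma). The biholomorphism $F:\B^n\to D$ is an isometry for the Kobayashi distances, so $(D,K_D)$ inherits Gromov hyperbolicity from $(\B^n,K_{\B^n})$: the latter is well known to be $\delta_0$-hyperbolic for some absolute constant $\delta_0>0$ (as one sees directly from the explicit description of its real geodesics recalled in Section~\ref{preli}). Hence it suffices to show that the Euclidean segment $[x,p_k]$, reparametrized by its Kobayashi arc-length, is a $(\lambda,C)$-quasi-geodesic of $(D,K_D)$ with constants $\lambda,C$ independent of $k$, for $p_k$ varying in a sufficiently small neighborhood $U=B(p,\varepsilon)$ of $p$, and then appeal to the Morse Lemma.

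The uniform quasi-geodesic property I would obtain by combining two convex-domain estimates. Denote by $\delta_D(\cdot)$ the Euclidean distance to $\partial D$, and set $z_t:=(1-t)x+tp_k$ for $t\in[0,1]$. By convexity of $D$, the convex hull of $B(x,\delta_D(x))$ and $B(p_k,\delta_D(p_k))$ is contained in $D$, which yields
\[
\delta_D(z_t)\;\geq\;(1-t)\,\delta_D(x)+t\,\delta_D(p_k).
\]
Feeding this into the elementary upper bound $\kappa_D(z;v)\leq|v|/\delta_D(z)$ for the Kobayashi infinitesimal metric, the Kobayashi length of any sub-segment $[z_s,z_t]\subset[x,p_k]$ is controlled by
\[
\int_s^t\frac{|p_k-x|}{(1-u)\,\delta_D(x)+u\,\delta_D(p_k)}\,du,
\]
which is comparable to $\bigl|\log\delta_D(z_t)-\log\delta_D(z_s)\bigr|$ up to a factor depending only on $x$ and $U$. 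On the other hand, the classical logarithmic lower bound $K_D(z_s,z_t)\geq\tfrac12\bigl|\log\delta_D(z_s)-\log\delta_D(z_t)\bigr|$ (valid in any convex domain) bounds the Kobayashi distance of arbitrary pairs of points on the segment from below by a quantity of the same logarithmic order. Matching these two estimates produces a pair $(\lambda,C)$, independent of $k$, making the reparametrized segment a $(\lambda,C)$-quasi-geodesic.

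The conclusion then follows from the Morse Lemma: in a $\delta_0$-hyperbolic geodesic space, every $(\lambda,C)$-quasi-geodesic is contained in a Kobayashi $M$-neighborhood of any geodesic joining its endpoints, with $M=M(\delta_0,\lambda,C)$. Applied to our segments this gives $\gamma_k(s)\in\mathcal N_M([x,p_k])$ for every $s\in[0,R_k]$, which is the asserted estimate. The hardest point is the uniform quasi-geodesic bound: the Kobayashi-length upper estimate is routine, but the matching logarithmic lower bound on $K_D(z_s,z_t)$ degenerates when both points lie deep inside $D$; I would patch this gap by noting that on compact subsets of $D$ the Kobayashi distance is comparable to the Euclidean distance, so the portion of the sub-segment contained there has uniformly bounded Kobayashi length and can be absorbed in the additive constant $C$.
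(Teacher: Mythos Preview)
Your overall strategy coincides with the paper's: both observe that $(D,K_D)$ is Gromov hyperbolic (being isometric to $(\B^n,K_{\B^n})$), both assert that the Euclidean segments $[x,p_k]$ are $(A,B)$-quasi-geodesics with constants independent of $k$ once $p_k$ lies in a small neighborhood of $p$, and both then invoke Gromov's shadowing/Morse lemma. The paper does not reprove the quasi-geodesic property but simply cites \cite[Lemma~6.17]{BG}; you go further and sketch an argument for it.

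Your sketch, however, has a gap in the ``matching'' step. The length upper bound you derive is governed by the \emph{linear interpolation} $f(u)=(1-u)\delta_D(x)+u\delta_D(p_k)$, namely
\[
\ell_{K_D}\bigl([z_s,z_t]\bigr)\;\le\;\frac{|p_k-x|}{|\delta_D(x)-\delta_D(p_k)|}\,\bigl|\log f(t)-\log f(s)\bigr|,
\]
whereas the lower bound you quote, $K_D(z_s,z_t)\ge\tfrac12|\log\delta_D(z_s)-\log\delta_D(z_t)|$, involves the \emph{actual} boundary distances. Since one only has $\delta_D(z_u)\ge f(u)$, these two quantities are not directly comparable: it can happen that $\delta_D(z_s)=\delta_D(z_t)$ (so your lower bound is $0$) while $|\log f(s)-\log f(t)|$ is large, and this can occur with both $z_s,z_t$ near $\partial D$, not only when both lie in a fixed compact set. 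Hence the patching you propose does not close the gap. The standard remedy, which is essentially what underlies \cite[Lemma~6.17]{BG}, is to run the lower bound through a fixed supporting real hyperplane $H$ at $p$: the height $h(z)=d(z,H)$ is \emph{affine} along $[x,p_k]$, so $K_D(z_s,z_t)\ge\tfrac12|\log h(z_s)-\log h(z_t)|$ compares directly with the length estimate, and the constants are uniform for $p_k$ near $p$. With this adjustment your outline goes through and is exactly the paper's argument.
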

\begin{proof}
By \cite[Lemma 6.17]{BG}, there exist $A>0$ and $B>0$ such that for every $k\in \N$, the real segment $[p_k, x]$ is a $(A,B)$-quasi-geodesic in the sense of Gromov (see, {\sl e.g.}, \cite[Section 6.2]{BG}, \cite{Zim}, \cite{GH}, \cite{Gr}). Therefore the statement of the lemma follows immediately from  Gromov's shadowing lemma  (see \cite[Th\'eor\`eme 11 p. 86]{GH}), since $(D,K_D)$ is Gromov hyperbolic because so is $(\B^n, K_{\B^n})$  and $F$ is an isometry for the Kobayashi distance.
\end{proof}

\begin{definition}
For a domain $D\subset \C^n$ we denote by $\overline{D}^\ast$ its closure in the one point compactification of $\C^n$. 
\end{definition}
Clearly, if $D$ is relatively compact in $\C^n$ then $\overline{D}^\ast=\overline{D}$, while, if $D$ is unbounded, then $\overline{D}^\ast=\overline{D}\cup \{\infty\}$. 

\begin{proposition}\label{continuous-ext}
Let $F:\B^n \to D$ be a biholomorphism. Suppose $D$ is convex.  Then there exists a continuous map $\tilde{F}:\overline{\B^n}\to \overline{D}^\ast$ such that $\tilde{F}|_{\B^n}=F$.
\end{proposition}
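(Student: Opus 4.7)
My plan is to extend $F$ to $\partial\B^n$ by taking limits along the Kobayashi geodesic rays emanating from the origin, and then to verify that the extension is sequence-independent and continuous. For $\xi\in\partial\B^n$, let $\gamma_\xi:[0,\infty)\to\B^n$ be the unit-speed real Kobayashi geodesic with $\gamma_\xi(0)=0$ and $\gamma_\xi(t)\to\xi$, and set $\beta:=F\circ\gamma_\xi$. Since $F$ is a Kobayashi isometry and $(\B^n,K_{\B^n})$ is complete, $\beta$ is a real geodesic of $D$ that eventually leaves every compact subset of $D$; hence every accumulation point of $\beta(t)$ in $\overline{D}^\ast$ lies in $\overline{D}^\ast\setminus D$.

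The central step is to show that $\beta(t)$ has a unique limit $p_0\in\overline{D}^\ast\setminus D$. Suppose instead that $\beta(t_n)\to p_1$ and $\beta(s_n)\to p_2$ with $t_n<s_n$ and $p_1\neq p_2$; at least one of them is finite, say $p_2$ (the case $p_1$ finite and $p_2=\infty$ is symmetric, using the interleaving $s_{n-1}<t_n$). Lemma \ref{herve} applied at $p_2$ to the sequence $\{\beta(s_n)\}$ yields $M>0$ such that, for $n$ large, $\beta([0,s_n])\subset\mathcal N_M([F(0),\beta(s_n)])$; in particular, since $t_n<s_n$, there exists $w_n\in[F(0),\beta(s_n)]$ with $K_D(\beta(t_n),w_n)\leq M$. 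Up to a subsequence, $w_n\to w_\infty\in[F(0),p_2]$. If $w_\infty$ lies in the relatively open segment $[F(0),p_2)\subset D$, properness of $(D,K_D)$ combined with $\beta(t_n)\to p_1\in\overline{D}^\ast\setminus D$ forces $K_D(\beta(t_n),w_\infty)\to\infty$, contradicting the uniform bound. If $w_\infty=p_2$, then $\{w_n\}$ and $\{\beta(t_n)\}$ converge to the distinct points $p_2\neq p_1$ in $\overline{D}^\ast\setminus D$, and Lemma \ref{zimmer}---together with its extension covering the case in which one of the limits is $\infty$, obtained by transferring the $\B^n$-version through $F^{-1}$---forces $K_D(\beta(t_n),w_n)\to\infty$, again a contradiction.

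Next I would show that every sequence $\{z_k\}\subset\B^n$ with $z_k\to\xi$ satisfies $F(z_k)\to p_0$. Let $\gamma_k:[0,R_k]\to\B^n$ be the real geodesic with $\gamma_k(0)=0$ and $\gamma_k(R_k)=z_k$; then $R_k\to\infty$ and $\gamma_k\to\gamma_\xi$ uniformly on compacta of $[0,\infty]$ by the preliminaries. Assume for contradiction $F(z_k)\to q\in\overline{D}^\ast\setminus D$ with $q\neq p_0$ along a subsequence. Applying Lemma \ref{herve} at the finite one of $q,p_0$ (to $F\circ\gamma_k$ or to $\beta$ respectively), for each fixed $T>0$ there are $u_{k,T}\in[F(0),F(z_k)]$ with $K_D(F(\gamma_k(T)),u_{k,T})\leq M$; passing $k\to\infty$ with $T$ fixed, up to a subsequence $u_{k,T}\to u_T\in[F(0),q]\cap D$ with $K_D(\beta(T),u_T)\leq M$. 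Letting $T\to\infty$ and passing $u_T$ to a further subsequence $u_\infty\in[F(0),q]$ gives the same alternative as in the previous step: either $u_\infty\in D$, so that $K_D(\beta(T),u_\infty)\to\infty$ contradicts the bound, or $u_\infty=q$, in which case the extended Lemma \ref{zimmer} applied to $\beta(T)\to p_0$ and $u_T\to q$ again forces $K_D\to\infty$. Hence $q=p_0$, and we may set $\tilde F(\xi):=p_0$.

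Continuity of $\tilde F$ on $\B^n$ is clear. At $\xi\in\partial\B^n$, given $\xi_k\to\xi$, choose $T_k\to\infty$ with $\|\gamma_{\xi_k}(T_k)-\xi_k\|<1/k$ and with $F(\gamma_{\xi_k}(T_k))$ within $1/k$ of $\tilde F(\xi_k)$ in $\overline{D}^\ast$ (possible since $F\circ\gamma_{\xi_k}(t)\to\tilde F(\xi_k)$); then $z_k:=\gamma_{\xi_k}(T_k)\to\xi$, so $F(z_k)\to\tilde F(\xi)$ by the sequence-independence step, and therefore $\tilde F(\xi_k)\to\tilde F(\xi)$. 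The main obstacle throughout is handling the point at infinity on equal footing with finite boundary points: Lemma \ref{zimmer} and Lemma \ref{herve} are stated for finite $p\in\partial D$, and the core argument tacitly needs that two sequences of $D$ with bounded Kobayashi distance cannot converge to two different points of $\overline{D}^\ast\setminus D$; the passage through $F^{-1}$ and the corresponding $\B^n$-version of this fact, together with convexity of $D$ and completeness of $K_D$, is the technical heart of the proposition.
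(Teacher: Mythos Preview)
Your overall strategy matches the paper's---combine Gromov shadowing (Lemma \ref{herve}) with Lemma \ref{zimmer}---but there is a genuine gap in how you handle the point $\infty$. In Step 1 (and again in Step 2) you need the statement: if $a_n\to\infty$ and $b_n\to p\in\partial D$ then $K_D(a_n,b_n)\to\infty$. You justify this as the ``extension of Lemma \ref{zimmer} obtained by transferring the $\B^n$-version through $F^{-1}$''. But the $\B^n$-version says only that sequences converging to \emph{distinct} points of $\partial\B^n$ have Kobayashi distance tending to $\infty$. In your situation $F^{-1}(a_n)=\gamma_\xi(t_n)\to\xi$, and if $K_D(a_n,b_n)\leq M$ then also $F^{-1}(b_n)\to\xi$: both preimages go to the \emph{same} point of $\partial\B^n$, so the $\B^n$-version yields nothing. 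What you would actually be concluding is that $F$ has both $\infty$ and $p$ in its cluster set at $\xi$---precisely the phenomenon you are trying to rule out. The argument is circular at this step.

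The paper avoids this by a connectivity reduction you omit: the cluster set of $F$ at $\xi$ (equivalently, the $\omega$-limit set of $\beta$) is a connected subset of the compact space $\overline{D}^\ast$, so if it contains $\infty$ and one finite point it must contain at least two distinct finite points of $\partial D$. Hence one may take both $p_1,p_2\in\partial D$ from the outset, and Lemma \ref{zimmer} applies verbatim. With that reduction your Steps 1--3 become correct. (A smaller asymmetry in Step 2 when $q=\infty$---applying Lemma \ref{herve} ``at $p_0$ to $\beta$'' controls $\beta$ by the segments $[F(0),\beta(S)]$ and does not by itself produce points $u_{k,T}\in[F(0),F(z_k)]$---also disappears after the reduction.) The paper further streamlines by dispensing with your two-step decomposition: it directly takes two sequences $\{z_k^1\},\{z_k^2\}\to\xi$ with distinct finite limits $p_1,p_2$, compares both families of geodesics $F\circ\gamma_k^j$ to the single limiting geodesic, and produces points $q_k^1,q_k^2$ on the respective segment-fans with $K_D(q_k^1,q_k^2)<2M$, contradicting Lemma \ref{zimmer} in one stroke.
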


\begin{proof} We show, and it is enough, that for every $\xi\in \partial \B^n$ either the limit $\lim_{z\to \xi}F(z)$ exists and belongs to $\partial D$, or $\lim_{z\to \xi}\|F(z)\|=\infty$.

Assume by contradiction that this is not true. Then there exists $\xi\in \partial \B^n$ and two sequences $\{z^1_k\}, \{z^2_k\}\subset \B^n$ converging to $\xi$ such that either $\lim_{k\to \infty}F(z^j_k)= p_j\in \partial D$, $j=1,2$,  for some $p_1\neq p_2$, or $\lim_{k\to \infty}F(z^1_k)= p_1\in \partial D$ and $\lim_{k\to \infty}\|F(z^2_k)\|= \infty$. In the second case, since the cluster set of $F$ at $\xi$ is connected, we can find another point $p_2\in \partial D$, $p_2\neq p_1$ and another sequence $\{w_k\}\subset \B^n$ converging to $\xi$ such that $\lim_{k\to \infty} F(w_k)=p_2$. Therefore, we only need to consider the first case. Let $p^j_k:=F(z^j_k)$, $k\in \N$ and $j=1,2$.

Let $x_0\in D$. For every $k\in \N$, let $\gamma_k^j:[0,R^j_k]\to D$  be the unique real (Kobayashi) geodesic such that $\gamma_k^j(0)=x_0$ and $\gamma_k^j(R^j_k)=p^j_k$, $k\in \N$ and $j=1,2$.  For $j=1,2$, let 
\[
V_j:=\bigcup_{k\in \N} [x_0, p^j_k].
\]
Up to replace $\{z_k^j\}_{k\in \N}$ with $\{z_k^j\}_{k\geq k_0}$ for some large $k_0\in \N$ in such a way that $\{p_k^j\}$ is all contained in a small neighborhood of $p_j$, $j=1,2$, by  Lemma \ref{herve} there exists $M>0$ such that for every $k\in \N$, for every $s\in [0, R^j_k]$ and $j=1,2$, it holds
\begin{equation}\label{Eq1}
\gamma_k^j(s)\in \mathcal N_M(V_j).
\end{equation}
Since $F$ is an isometry for the Kobayashi distance, it follows that $F^{-1}\circ \gamma_k^j:[0, R_k^j]\to \B^n$ is the unique real geodesic joining $F^{-1}(x_0)$ and $z_k^j$, $j=1,2$. Since $\{z_k^j\}$ converges to $\xi$ as $k\to \infty$, $j=1,2$, it follows that  $F^{-1}\circ \gamma_k^j$ converges uniformly on compacta of $[0,\infty)$ to the unique real geodesic $\tilde\gamma$ in $\B^n$ joining $F^{-1}(x_0)$ with $\xi$. Therefore, if we let $\gamma:=F \circ \tilde\gamma$, since $F$ is an isometry for the Kobayashi distance, it follows that for every $R>0$ there exists $k_R\in \N$ such that for every $s\in [0,R]$ and for every $k\geq k_R$ it holds for $j=1,2$, 
\begin{equation}\label{eq2}
K_{D}(\gamma^j_k(s), \gamma(s))<M.
\end{equation}
By the triangle inequality, \eqref{Eq1} and \eqref{eq2} imply that
\[
\gamma(s) \in \mathcal N_M(V_1)\cap \mathcal N_M(V_2),\quad \forall s\in [0,\infty).
\] 
In particular,  this implies that  there exist two sequences $\{q^j_k\}\subset V_j$, $j=1,2$, such that for every $k\in \N$ and $j=1,2$ it holds
\begin{equation}\label{Eq3}
K_D(\gamma(k), q^j_k)<M.
\end{equation}
Since the sequence $\{\gamma(k)\}_{k\in \N}$ is not relatively compact in $D$ and $D$ is complete hyperbolic, it follows that $\{q_k^j\}$ is not relatively compact in $D$, $j=1,2$. Using the convexity of $D$ it is not hard to see that the only possibility is that $\lim_{k\to \infty} q_k^j=p_j$, $j=1,2$. But, by the triangle inequality, \eqref{Eq3} implies that 
\[
\lim_{k\to \infty} K_D(q_k^1, q_k^2)<2M,
\]
contradicting Lemma \ref{zimmer}. 
\end{proof}

\section{Infinite discontinuities}

\begin{definition}
Let $F:\B^n \to \C^n$ be holomorphic. We say that $\xi\in \partial \B^n$ is an {\sl infinite discontinuity} of $F$ if $\lim_{z\to \xi} \|F(z)\|=\infty$.
\end{definition}

\begin{proposition}\label{singular}
Let $F:\B^n\to D$ be a biholomorphism. Suppose $D$ is convex and unbounded. Then $F$ has either one or two infinite discontinuities. More precisely,
\begin{enumerate}
\item if $D$ has one connected component at $\infty$ then $F$ has  one infinite discontinuity,
\item if $D$ has two connected components at $\infty$  then  $F$ has two infinite discontinuities. 
\end{enumerate}
\end{proposition}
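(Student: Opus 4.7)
The plan is to realize the infinite discontinuities of $F$ as Denjoy--Wolff points of semigroups associated with directions at infinity of $D$, and then to constrain their number using the commuting-maps theorem and horosphere geometry. By Lemma~\ref{two-components} there is at least one direction $v$ at infinity, and the formula $\phi_t^v(\zeta):=F^{-1}(F(\zeta)+tv)$, $t\ge 0$, defines a continuous one-parameter semigroup of holomorphic self-maps of $\B^n$ with no interior fixed points. Its Denjoy--Wolff point $\tau_v\in\partial\B^n$ satisfies $\phi_t^v(0)\to\tau_v$ while $F(\phi_t^v(0))=F(0)+tv\to\infty$, so the continuous extension $\tilde F:\overline{\B^n}\to\overline{D}^\ast$ provided by Proposition~\ref{continuous-ext} gives $\tilde F(\tau_v)=\infty$, and hence $\tau_v$ is an infinite discontinuity.

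Next I would show that every infinite discontinuity $\xi$ equals some $\tau_v$. Choose $\zeta_k\to\xi$ in $\B^n$, set $z_k:=F(\zeta_k)$, and extract a subsequence with $z_k/\|z_k\|\to v$, a direction at infinity. By Lemma~\ref{herve} the real Kobayashi geodesic in $D$ from $F(0)$ to $z_k$ lies in an $M$-neighborhood of $[F(0),z_k]$, and these segments converge locally to the ray $\{F(0)+sv:s\ge 0\}$. Hence the image $F\circ\gamma_\xi$ of the geodesic from $0$ to $\xi$ in $\B^n$ sits at Kobayashi distance at most $M$ from points of the form $F(\phi_s^v(0))$ for a suitable $s=s(t)\to\infty$; translating back to $\B^n$, $\gamma_\xi(t)$ is Kobayashi-close to $\phi_{s(t)}^v(0)\to\tau_v$, and Lemma~\ref{zimmer} applied inside $\B^n$ forces $\xi=\tau_v$.

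To bound the number of infinite discontinuities I would invoke Theorem~\ref{filippo}: if $v,w$ are directions with $\tau_v\neq\tau_w$, the commuting semigroups $\phi_t^v,\phi_s^w$ both preserve the disc $\Delta\subset\B^n$ through $\tau_v,\tau_w$, and the complex curve $F(\Delta)$ is then invariant under translation by both $tv$ and $sw$. The real vectors $v$ and $w$ must therefore lie in every complex tangent plane of $F(\Delta)$, forcing $F(\Delta)\subset F(x_0)+\C v$ and $w\in\C v$; if in addition $v,w$ were $\R$-linearly independent, the two translation flows would sweep out the entire complex line, making $F(\Delta)\cong\C$, contradicting $F(\Delta)\cong\Delta\cong\D$. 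Hence $w=-v$. In the one-component case Lemma~\ref{two-components} furnishes either a unique direction or an $\R$-linearly independent pair, and the contrapositive of the above collapses all Denjoy--Wolff points to a single point, so there is exactly one infinite discontinuity.

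The hard part will be the two-component case, where the directions at infinity are exactly $\pm v$ and I must show $\tau_v\neq\tau_{-v}$. Assume for contradiction $\tau_v=\tau_{-v}=\tau$; then $\phi_t^v$ extends to a continuous one-parameter group of automorphisms of $\B^n$ (because $-v$ is also a direction) with $\tau$ as its only boundary fixed point, hence a parabolic group, preserving every horosphere $E^{\B^n}(\tau,R)$. By Lemma~\ref{keepcpnv}, $F(E^{\B^n}(\tau,R))$ is then a convex subset of $D$ invariant under $v$-translation, so it contains the whole real line $F(p)+\R v$ for every point $p$ in it; the two halves of that line enter respectively the two unbounded components $U_+$ and $U_-$ of $D\setminus\overline{B(0,R_0)}$ for $R_0$ large. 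On the other hand, the identity $\bigcap_{R>0}\overline{E^{\B^n}(\tau,R)}=\{\tau\}$ combined with the continuity of $\tilde F$ at $\tau$ (where $\tilde F(\tau)=\infty$) lets me choose $R$ so small that $F(E^{\B^n}(\tau,R))\subset\{\|z\|>K\}$ for any prescribed $K$. Taking $K$ so large that $D\setminus\overline{B(0,K)}$ has only the two unbounded components (the tails of $U_+$ and $U_-$), the connected set $F(E^{\B^n}(\tau,R))$ would be forced to lie in one of them, yet it meets both---a contradiction. This final horosphere-shrinking step, coupling the continuous extension of Proposition~\ref{continuous-ext} with the parabolic invariance of horospheres, is the main obstacle.
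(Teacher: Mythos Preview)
Your overall strategy is sound and parallels the paper's, but your second step---showing that every infinite discontinuity $\xi$ equals some $\tau_v$---has a genuine gap. You invoke Lemma~\ref{herve} for a sequence $z_k\to\infty$, but that lemma is stated only for sequences converging to a \emph{finite} boundary point $p\in\partial D$: the neighborhood $U$ and the shadowing constant $M$ are produced relative to $p$, and nothing in the paper gives a uniform version for sequences escaping to infinity. The paper handles this step (equation~\eqref{sequence-w}) by a horosphere argument instead: assuming $F^{-1}(p_k)\to\xi_0\neq\tau_w=\xi_1$, one notes that $F(E^{\B^n}(\xi_0,R))$ is convex (Lemma~\ref{keepcpnv}) and unbounded, hence carries its own direction $u$ at infinity with $\tau_u=\xi_0$; either $u$ is $\R$-independent of $w$ (contradicting the step you call~3), or $u=-w$, and then a connectedness argument manufactures a third boundary point $\xi_2$ to which the previous case applies. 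This is more delicate than a direct shadowing argument and does not rely on any quasi-geodesic estimate near infinity.

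Where your proposal is complete, it takes genuinely different routes. For the claim that $\R$-independent directions $v,w$ yield the same Denjoy--Wolff point, the paper exploits the hyperbolic dynamics on $\Delta$ directly: by Remark~\ref{metocca} there is $s_0>0$ with $\psi_{s_0}(\phi_1(\zeta_0))=\zeta_0$, which unwinds to $s_0w+v=0$. Your alternative---$F(\Delta)$ is a one-dimensional complex curve invariant under translation by both $v$ and $w$, hence (since these are automorphisms of $\Delta$) equal to an entire affine complex line, contradicting $F(\Delta)\cong\D$---is valid and pleasantly geometric. For the two-component case, the paper observes that the real geodesic in $\B^n$ joining $F^{-1}(z_0+tv)$ and $F^{-1}(z_0-tv)$ shrinks into any neighborhood of $\tau$ as $t\to\infty$, so its $F$-image eventually avoids $\overline{B(0,R)}$ yet connects the two unbounded components. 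Your parabolic-group argument (the group preserves each horosphere $E^{\B^n}(\tau,R)$, hence $F(E^{\B^n}(\tau,R))$ contains full real lines parallel to $v$, while for small $R$ it lies inside $\{\|z\|>K\}$) reaches the same contradiction and is equally clean; just make explicit that parabolicity, and hence horosphere-invariance, follows from $\tau_v=\tau_{-v}$ via the boundary dilation coefficient being~$1$.
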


\begin{proof}
By Lemma \ref{two-components}, there exists $v\in \C^n$ a direction at $\infty$ for $D$. Then for every $z\in D$ and $t\geq 0$, it holds $z+tv\in D$. Therefore, for every $t\geq 0$, the map $\phi_t:\B^n\to \B^n$, given by
\[
\phi_t(\zeta):=F^{-1}(F(\zeta)+tv),
\]
is well defined and univalent. Moreover, it is easy to see that $\phi_0={\sf id}_{\B^n}$, that $\phi_{t+s}=\phi_t\circ \phi_s$ for every $t, s\geq 0$ and that $[0,\infty)\ni t\mapsto \phi_t$ is continuous with respect to the Euclidean topology in $[0,\infty)$ and the topology of uniform convergence on compacta of ${\sf Hol}(\B^n, \B^n)$. In other words, $(\phi_t)_{t\geq 0}$ is a continuous one-parameter semigroup of $\B^n$. Moreover, by definition,  $\phi_t$ has no fixed points in $\B^n$ for $t>0$. By the continuous version of the Denjoy-Wolff theorem for continuous one-parameter semigroups of $\B^n$ (see \cite{Abate, AS, BCD}), there exists a unique point $\xi\in \partial \B^n$ (which is in fact the Denjoy-Wolff point of $\phi_t$ for every $t>0$) such that for every $\zeta\in \B^n$ it holds $\lim_{t\to \infty}\phi_t(\zeta)=\xi$. Unrolling the definition of $\phi_t$, this means that for every $z\in D$ we have
\[
\lim_{t\to \infty}F^{-1}(z+tv)=\xi.
\]
Now, let us assume that there exists another direction at $\infty$ for $D$, say $w$, which is $\R$-linearly independent of $v$. Then we can define another continuous one-parameter semigroup of $\B^n$, call it $(\psi_s)_{s\geq 0}$, by setting
\[
\psi_s(\zeta):=F^{-1}(F(\zeta)+sw).
\]
Let $\xi':=\lim_{s\to \infty}F^{-1}(z+sw)$ for some--and hence, as we just proved, for every--$z\in D$. We claim that $\xi=\xi'$. 

Suppose by contradiction that $\xi\neq\xi'$. It is easy to see that for every $s, t\geq 0$ the maps $\phi_t$ and $\psi_s$ commute, {\sl i.e.}, $\phi_t\circ \psi_s=\psi_s\circ \phi_t$. Let $\Delta:=(\C(\xi-\xi')+\xi)\cap \B^n$. Since the Denjoy-Wolff point of $\phi_t$  is $\xi$ for all $t> 0$ and the Denjoy-Wolff point of $\psi_s$ is $\xi'$ for all $s>0$, and we are supposing $\xi\neq \xi'$, by Theorem \ref{filippo}, it follows that for all $s, t\geq 0$,
\[
\phi_t(\Delta)=\Delta, \quad \psi_s(\Delta)=\Delta
\]
and the restriction of $\phi_t$ and $\psi_s$ to $\Delta$ are hyperbolic automorphisms of $\Delta$. Let $\gamma:(-\infty, +\infty)\to \Delta$ be a parameterization of the real  geodesic in $\Delta$ whose closure contains $\xi$ and $\xi'$. Then, it follows that $\phi_t\circ \gamma:(-\infty,\infty)\to \Delta$ and $\psi_s\circ \gamma:(-\infty,\infty)\to \Delta$ are other parameterizations for the same real Kobayashi geodesic whose closure contains $\xi$ and $\xi'$, for all $s, t\geq 0$. Moreover, since $\xi$ is the Denjoy-Wolff point of $\phi_t|_\Delta$ and $\xi'$ is the Denjoy-Wolff point of $\psi_s|_\Delta$, it follows that (see Remark \ref{metocca}), letting $\zeta_0:=\gamma(0)$,  there exists $s_0\in (0,\infty)$ such that $\psi_{s_0}(\phi_1(\zeta_0))=\zeta_0$. Thus, 
\[
F(\zeta_0)=F(\psi_{s_0}(\phi_1(\zeta_0)))=F(\zeta_0)+s_0 w+v,
\]
that is, $s_0 w+v=0$, which is a contradiction since $w$ and $v$ are assumed to be $\R$-linearly independent.

Summing up we proved that for every two directions $v, w$ at $\infty$ for $D$ which are $\R$-linearly independent it holds
\begin{equation}\label{v-w-limit}
\lim_{t\to \infty}F^{-1}(z_0+tw)=\lim_{t\to \infty}F^{-1}(z_1+tv), \quad \forall z_0, z_1\in D.
\end{equation}

Now, let $\{p_k\}\subset D$ be a sequence converging to $\infty$, and assume that $\lim_{k\to \infty} \frac{p_k}{\|p_k\|}=w$, for some $w\in \C^n$ direction at $\infty$ for $D$. Let $z_0\in D$. We claim that
\begin{equation}\label{sequence-w}
\lim_{k\to \infty}F^{-1}(p_k)=\lim_{t\to \infty}F^{-1}(z_0+tw).
\end{equation}
Assume this is not the case and, up to subsequences, $\lim_{k\to \infty}F^{-1}(p_k)=\xi_0\in \partial \B^n$ and  $\lim_{t\to \infty}F^{-1}(z_0+tw)=\xi_1\in \partial \B^n$ with $\xi_0\neq \xi_1$. 

Let fix $R>0$ and let $E:=E^{\B^n}(\xi_0, R)$ be the horosphere of center $\xi_0$ and radius $R$ in $\B^n$. Then $F(E)=E^D(\{p_k\}, R)$  is convex by Lemma \ref{keepcpnv}. Also, there exists $t_0\geq 0$ such that $z_0+tw\not\in F(E)$ for all $t\geq t_0$, because $F^{-1}(z_0+tw)$ does not converge to $\xi_0$ as $t\to \infty$ and hence it is eventually outside $E$. Moreover, $F(E)$ is unbounded because by Proposition \ref{continuous-ext}, $\lim_{z\to \xi_0}\|F(z)\|=\lim_{k\to \infty} \|F(F^{-1}(p_k))\|=\infty$. By Lemma \ref{two-components}, there exists a direction at $\infty$ for $F(E)$, call it $u$, that is, $z+tu\in F(E)$ for every $z\in F(E)$ and for every $t\geq 0$. Since $\overline{E}\cap \partial \B^n=\{\xi_0\}$, it follows that $\lim_{t\to \infty}F^{-1}(z+tu)=\xi_0$. 

Hence, there are two cases: either $u$ is $\R$-linearly independent of $w$, or $u=-w$. In the first case, we contradict \eqref{v-w-limit}. Therefore, $u=-w$. In this case, note that for every $R>0$ there exist $k_R\in \N$ and $t_R\geq 0$ such that $p_k$ and $z_0+tw$ are  in the same unbounded connected component of $D\setminus\overline{B(0,R)}$ for every $k>k_R$, and $t>t_R$, call $U_R$ such an unbounded component. Hence, $\bigcap_{R>0} \overline{F^{-1}(U_R)}$ is compact and connected in $\partial \B^n$ and contains $\xi_0$ and $\xi_1$. Since $\xi_0\neq \xi_1$, this implies that there exists a sequence $\{q_k\}\subset D$ converging to $\infty$ which is eventually contained in $U_R$ for all $R>0$ such that $\{F^{-1}(q_k)\}$ converges to a point $\xi_2\in \partial \B^n$ with $\xi_2\neq \xi_0$ and $\xi_2\neq \xi_1$. We can then repeat the previous argument considering $\xi_2$ instead of $\xi_1$ and $\{q_k\}$ instead of $\{p_k\}$. But this time,  the direction $u$ at $\infty$ for $F(E)$ can not be $w$ nor $-w$, since, by construction, both $F^{-1}(z_0+tw)$ and $F^{-1}(z_0-tw)$ are not eventually contained in $E=E^{\B^n}(\xi_2,R)$ for $t\geq 0$, hence, we are back to the first case and again we  contradict \eqref{v-w-limit}. Therefore, \eqref{sequence-w} holds. 

Suppose now that $D$ has one connected component at $\infty$. If for every direction $v$ at $\infty$ for $D$, the vector $-v$ is not a direction at $\infty$ for $D$, then it follows immediately from \eqref{sequence-w} and \eqref{v-w-limit} that $F$ has only one infinite singularity. In case $v$ and $-v$ are directions at $\infty$ for $D$, we claim that  for every $z_0 \in D$:
\[
\lim_{t\to \infty}F^{-1}(z_0+tv)=\lim_{t\to \infty}F^{-1}(z_0-tv).
\]
Indeed, by Lemma \ref{two-components}, since $D$ has one connected component at $\infty$, there exists some other direction $w$ at $\infty$ for $D$ such that $w\neq \pm v$. Therefore, $w$  is  $\R$-linearly independent of $v$ (and then of $-v$). Hence by \eqref{v-w-limit}
\[
\lim_{t\to \infty}F^{-1}(z_0+tv)=\lim_{t\to \infty}F^{-1}(z_0+tw)=\lim_{t\to \infty}F^{-1}(z_0-tv).
\]
From this and from \eqref{sequence-w} and \eqref{v-w-limit} it follows again that $F$ has a unique infinite discontinuity.

On the other hand, if $D$ has two connected components at $\infty$, the only directions at $\infty$ for $D$ are $v$ and $-v$. We claim in this case that  for every $z_0 \in D$:
\begin{equation}\label{vo-inf}
\lim_{t\to \infty}F^{-1}(z_0+tv)\neq\lim_{t\to \infty}F^{-1}(z_0-tv).
\end{equation}
Once we proved the claim (\ref{vo-inf}), \eqref{sequence-w} implies at once that $F$ has exactly two infinite discontinuities. 

In order to prove the claim, assume by contradiction that $\xi:=\lim_{t\to \infty}F^{-1}(z_0+tv)=\lim_{t\to \infty}F^{-1}(z_0-tv)$. 
By Lemma \ref{two-components}, there exists $R_0$ such that for every $R\geq R_0$ the open set $D\setminus \overline{B(0,R)}$ has two unbounded connected components, say $U_1, U_2$. Up to relabelling, it is clear that $z_0+tv$ is eventually contained in $U_1$ and  $z_0-tv$ is eventually contained in $U_2$, for $t$ large. Let $K:=\overline{F^{-1}(D\cap \overline{B(0,R)})}$. Since $\lim_{z\to \xi}\|F(z)\|=\infty$ by Proposition~\ref{continuous-ext}, it follows that $\xi\not\in K$, while $\zeta^+_t:=F^{-1}(z_0+tv)$ and $\zeta^-_t:=F^{-1}(z_0-tv)$ are close to $\xi$ for $t$ large. Therefore, the real (Kobayashi) geodesic $\gamma_t$ joining $\zeta^+_t$  and $\zeta^-_t$ satisfies $\gamma_t\cap K=\emptyset$ for $t$ sufficiently large. But then $F(\gamma_t)$ is a continuous path in $D\setminus \overline{B(0,R)}$ which joins $z_0+tv$ and $z_0-tv$, against the fact that $z_0+tv\in U_1$ and $z_0-tv\in U_2$. Thus, \eqref{vo-inf} holds.
\end{proof}

\section{Homeomorphic extension of convex maps}\label{final}

In this section we collect the previous results and prove our main theorem, from which Theorem \ref{main-intro} follows at once:

\begin{theorem}\label{main}
Let $F:\B^n \to D$ be a biholomorphism. Suppose that $D$ is convex. Then 
\begin{enumerate}
\item $D$ is bounded and $F$ extends as a homeomorphism $\tilde{F}: \overline{\B^n}\to \overline{D}$, or
\item $D$ is unbounded and has one connected component at $\infty$, $F$ extends as a homeomorphism $\tilde{F}: \overline{\B^n}\to \overline{D}^\ast$. In particular, $F$ has only one infinite discontinuity $\xi\in \partial \B^n$ such that $\lim_{z\to \xi}\|F(z)\|=\infty$ and $F$ extends as a homeomorphism $\tilde{F}: \overline{\B^n}\setminus \{\xi\}\to \overline{D}$. Or,
\item  $D$ is unbounded and has two connected components at $\infty$, $F$ has two infinite discontinuities $\xi_1, \xi_2\in \partial \B^n$, $\xi_1\neq\xi_2$, such that $\lim_{z\to \xi_1}\|F(z)\|=\lim_{z\to \xi_2}\|F(z)\|=\infty$ and $F$ extends as a homeomorphism $\tilde{F}: \overline{\B^n}\setminus \{\xi_1, \xi_2\}\to \overline{D}$.
\end{enumerate}
\end{theorem}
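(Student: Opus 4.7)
The plan is to combine Propositions~\ref{continuous-ext} and~\ref{singular} with a Gromov fellow-traveler argument. Proposition~\ref{continuous-ext} gives a continuous extension $\tilde F:\overline{\B^n}\to\overline{D}^\ast$, and Proposition~\ref{singular} identifies the set $S:=\tilde F^{-1}(\infty)\cap\partial\B^n$, with $|S|\in\{0,1,2\}$ matching the three cases. Surjectivity of $\tilde F$ onto $\overline{D}^\ast$ is automatic, since $\tilde F(\overline{\B^n})$ is compact and contains $D=F(\B^n)$. The theorem therefore reduces to injectivity of $\tilde F$ on $\overline{\B^n}\setminus S$: once this is known, continuity of the inverse on $\overline D$ follows because any cluster point $\xi\in\overline{\B^n}$ of $\tilde F^{-1}(q_k)$ for $q_k\to q\in\overline D$ must satisfy $\tilde F(\xi)=q\in\overline D$, which forces $\xi\notin S$ and hence $\xi=\tilde F^{-1}(q)$ by injectivity.

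For the injectivity I would argue by contradiction: suppose $\xi_1\ne\xi_2$ in $\partial\B^n\setminus S$ satisfy $\tilde F(\xi_1)=\tilde F(\xi_2)=p\in\partial D$. Let $\tilde\gamma_j:[0,\infty)\to\B^n$ be the real Kobayashi geodesic from $0$ to $\xi_j$ and $\Gamma_j:=F\circ\tilde\gamma_j$ the corresponding real geodesic in $D$ from $F(0)$, so that $\lim_{s\to\infty}\Gamma_j(s)=p$ by Proposition~\ref{continuous-ext}. Choose $T_k\to\infty$ and set $p_k^{(j)}:=\Gamma_j(T_k)\to p$. The real geodesic in $D$ from $F(0)$ to $p_k^{(j)}$ is precisely $\Gamma_j|_{[0,T_k]}$, so Lemma~\ref{herve} yields a constant $M>0$, independent of $j$, with $\Gamma_j(s)\in\mathcal N_M([F(0),p_k^{(j)}])$ for every $s\in[0,T_k]$ and all $k$ large. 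Fixing $s$ and letting $k\to\infty$, compactness of closed $K_D$-balls in $D$ produces projections $y_j(s)\in[F(0),p)\cap D$ with $K_D(\Gamma_j(s),y_j(s))\le M$.

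The crucial analytic input is that the open real segment $[F(0),p)\cap D$, parameterized by Kobayashi arclength from $F(0)$, is itself an $(A,B)$-quasi-geodesic ray $\pi:[0,\infty)\to D$, with constants inherited as a limit of the uniform $(A,B)$-quasi-geodesic estimates on $[F(0),p_k^{(j)}]$ supplied by \cite[Lemma~6.17]{BG}. Writing $y_j(s)=\pi(t_j(s))$, the triangle inequality together with $K_D(F(0),\Gamma_j(s))=s$ forces $t_j(s)\in[s-M,s+M]$, whence $|t_1(s)-t_2(s)|\le 2M$. The quasi-geodesic upper bound then gives $K_D(y_1(s),y_2(s))\le 2AM+B$, and one more triangle inequality yields
\[
K_D(\Gamma_1(s),\Gamma_2(s))\le 2M+2AM+B \qquad \text{for all } s\ge 0.
\]
This contradicts $K_D(\Gamma_1(s),\Gamma_2(s))=K_{\B^n}(\tilde\gamma_1(s),\tilde\gamma_2(s))\to\infty$, which holds by the isometry property of $F$ and Lemma~\ref{zimmer} (applied in $\B^n$, since $\xi_1\ne\xi_2$). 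The main obstacle is making the limit $k\to\infty$ rigorous: producing a genuine quasi-geodesic parameterization of $[F(0),p)\cap D$ with projections $y_j(s)$ remaining in the interior of $D$ relies on the uniform constants from \cite[Lemma~6.17]{BG} and the completeness of $(D,K_D)$.
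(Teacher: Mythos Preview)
Your reduction to injectivity of $\tilde F$ on $\overline{\B^n}\setminus S$ is correct, as is your sequential argument for continuity of the inverse. For injectivity, however, the paper takes a much shorter horosphere route instead of your fellow-traveler argument: assuming $\tilde F(x_1)=\tilde F(x_2)=p\in\partial D$ with $x_1\ne x_2$, pick $R>0$ with $V:=E^{\B^n}(x_1,R)\cap E^{\B^n}(x_2,R)\ne\emptyset$. Since $\overline{E^{\B^n}(x_j,R)}\cap\partial\B^n=\{x_j\}$, the set $V$ is relatively compact in $\B^n$, hence $F(V)$ is relatively compact in $D$. But each $E_j:=F(E^{\B^n}(x_j,R))$ is convex by Lemma~\ref{keepcpnv} and has $p$ in its closure, so for any $z_0\in F(V)$ the segment $[z_0,p)$ lies in $E_1\cap E_2=F(V)$, contradicting relative compactness. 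This exploits the convexity of horospheres directly and avoids re-running the Gromov machinery already used in Proposition~\ref{continuous-ext}; your route is more self-contained from the hyperbolic-geometry side, but longer.

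Your approach does work in principle, yet one step is not right as written. From $K_D(F(0),\Gamma_j(s))=s$ and $K_D(\Gamma_j(s),y_j(s))\le M$ you correctly get $K_D(F(0),y_j(s))\in[s-M,s+M]$; however, for an arclength-parameterized $(A,B)$-quasi-geodesic $\pi$, arclength dominates Kobayashi distance but may exceed it by the factor $A$, so you only obtain $t_j(s)\in[s-M,\,A(s+M)+AB]$, and $|t_1(s)-t_2(s)|$ is a priori unbounded in $s$. The cure is one further use of the shadowing lemma: the quasi-geodesic ray $[F(0),p)$ lies within a uniform $K_D$-distance $M'$ of a genuine geodesic ray $\rho$ from $F(0)$; on $\rho$ the parameter coincides with the distance from $F(0)$, so projecting $y_j(s)$ onto $\rho$ your triangle-inequality estimate goes through with $M$ replaced by $M+M'$, yielding $K_D(\Gamma_1(s),\Gamma_2(s))\le 4(M+M')$ for all $s$ and the desired contradiction with Lemma~\ref{zimmer}.
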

\begin{proof}
By Proposition \ref{continuous-ext}, $F$ has a continuous extension $\tilde{F}: \overline{\B^n}\to \overline{D}^\ast$. If $D$ is unbounded, by Proposition~\ref{singular}, $\tilde{F}^{-1}(\infty)$ contains either one point (in case $D$ has one connected component at $\infty$) or two points (in case $D$ has two connected components at $\infty$). 

We show that, for every $x_1, x_2\in \partial \B^n$, $x_1\neq x_2$, such that $\tilde{F}(x_j)\neq \infty$, $j=1,2$, it follows $\tilde{F}(x_1)\neq \tilde{F}(x_2)$. The proof of this is similar to that of \cite[Corollary 8.3]{BG}, but we sketch it here for the reader convenience.

Assume by contradiction that there exist $x_1, x_2\in \partial \B^n$ such that $p=\tilde F(x_1)=\tilde F(x_2)\in \partial D$.  Let $R>0$ be such that $V:=E^{\B^n}(x_1, R)\cap E^{\B^n}(x_2, R)\neq\emptyset$. Then $V$ is an open, convex, relatively compact  subset of $\B^n$. Since $E_j:=F(E^{\B^n}(x_j, R))$, $j=1,2$, is a horosphere in $D$, hence it is convex by Lemma \ref{keepcpnv}, it follows that $F(V)$ is an open, convex, relatively compact  subset of $D$.  

Now, let $\{\zeta_k^j\}_{k\in \N}\subset E^{\B^n}(x_j, R)$,  be a sequence converging to $x_j$, $j=1,2$. Hence by hypothesis, $p=\lim_{k\to \infty}F(\zeta_k^j)$, $j=1,2$, showing that $p\in \overline{E_1}\cap \overline{E_2}$. Therefore, given any $z_0\in V$, it follows that $[z_0,p)\subset E_j$, $j=1,2$. Hence, $[z_0,p)\subset F(V)$, which is not relatively compact in $D$, a contradiction.

Thus, if either $D$ is relatively compact in $\C^n$, or if $D$ has one connected component at $\infty$, it follows that $\tilde{F}:\overline{\B^n}\to \overline{D}^\ast$ is a bijective continuous map from a compact space to a Hausdorff space, hence, a homeomorphism. 

In case $D$ has two connected components at $\infty$, let $\xi_1, \xi_2\in \partial \B^n$ be the two infinite discontinuities of $F$. Since every closed subset $C\subset \overline{\B^n}\setminus\{\xi_1, \xi_2\}$ is also compact in $\overline{\B^n}\setminus\{\xi_1, \xi_2\}$, it follows that $\tilde{F}(C)$ is compact in $\overline{D}$, and hence closed. Therefore, $\tilde{F}: \overline{\B^n}\setminus\{\xi_1, \xi_2\}\to \overline{D}$ is a closed, bijective, continuous map, thus it is a homeomorphism. 
\end{proof}

\end{document}